\newtheorem{thm}{Theorem}[section]
\newtheorem{cor}[thm]{Corollary}
\newtheorem{lem}[thm]{Lemma}
\theoremstyle{definition}
\theoremstyle{remark}
\numberwithin{equation}{section}
\newcommand{\IR}{\mathbb R}
\newcommand{\IN}{\mathbb N}
\newcommand{\Ra}{\Rightarrow}
\newcommand{\w}{\omega}
\newcommand{\e}{\varepsilon}
\begin{document} \sloppy

\title[Linearly continuous functions and $F_\sigma$-measurability]{Linearly continuous functions and $F_\sigma$-measurability}%
\thanks{This research was supported by the University of Silesia Mathematics Department (Iterative Functional Equations and Real Analysis program)}

\author[T. Banakh]{Taras Banakh}%
\address{Institute of Mathematics, Jan Kochanowski University in Kielce, \.Zeromskiego Str.  5, 25-369 Kielce, Poland and Department of Mechanics and Mathematics, Ivan Franko Lviv National University, Universytetska Str.  1, 79000 Lviv, Ukraine
}
\email{t.o.banakh@gmail.com}%

\author[O.  Maslyuchenko]{Oleksandr Maslyuchenko}%

\address{Institute of Mathematics, University of Silesia in Katowice, Bankowa 12, 40-007 Katowice, Poland and Yuriy Fedkovych Chernivtsi National University, Department of Mathematical Analysis, Kotsiubynskoho 2, 58012 Chernivtsi, Ukraine}
\email{ovmasl@gmail.com}%

%(15) (PDF) Extension of functions and metrics with variable domains. Available from: https://www.researchgate.net/publication/305851210_Extension_of_functions_and_metrics_with_variable_domains [accessed Feb 01 2019].
%\thanks{}%
\subjclass{54C20, 54C10, 54C30,  26A15}
\keywords{Linearly continuous function, separately continuous function, quasi-continuous function, function of the first Baire class, $F_\sigma$-measurable function, Borel-measurable function, BP-measurable function, conically quasi-continuous function, $\ell$-neighborhood, $\ell$-miserable set, $\bar\sigma$-$\ell$-miserable set}

%\date{3.03.2016}%
%\dedicatory{}%
%\commby{}%
% ----------------------------------------------------------------
\begin{abstract}
The linear continuity of a function defined on a vector space means that its restriction  to every affine line is continuous. For functions defined on $\mathbb R^m$ this notion is near to the separate continuity for which it is required  only the continuity on the straight lines which are parallel to coordinate axes.  The classical  Lebesgue theorem states that every separately continuous function $f:\mathbb R^m\to\mathbb R$ is of the $(m-1)$-th Baire class. In this paper we prove that every linearly continuous function $f:\mathbb R^m\to\mathbb R$ is of the first Baire class. Moreover, we obtain the following result. If $X$ is a Baire cosmic  topological vector space, $Y$ is a Tychonoff topological space and $f:X\to Y$ is a Borel-measurable (even BP-measurable)  linearly continuous function, then $f$ is $F_\sigma$-measurable. Using this theorem we characterize the discontinuity point set of an arbitrary linearly continuous function on $\mathbb R^m$. In the final part of the article we prove that  any $F_\sigma$-measurable function $f:\partial U\to \IR$ defined on the boundary of a strictly convex open set $U\subset\IR^m$ can be extended to a  linearly continuous function $\bar f:X\to \IR$. This fact shows  that  in the ``descriptive sense'' the linear continuity is  not better  than the $F_\sigma$-measurability.
\end{abstract}
\maketitle
% ----------------------------------------------------------------

%\def\<{\subset}           \def\>{\supset}           \def\*{\times}
%\def\ov{\overline}          \def\Int{{\rm int}}       \def\Cl{{\rm cl}}
%\def\pr{{\rm pr}}         \def\copr{{\rm copr}}     \def\dim{{\rm dim}}
%\def\fr{{\rm fr}}         \def\ext{{\rm ext}} \def\supp{{\rm supp}\,}
%\def\os{\stackrel{\underline{\phantom{\subset}}}{\subset}}
%\def\diam{{\rm diam}}     \def\span{{\rm span}}     \def\co{{\rm co}}
%\def\absco{{\rm absco}} \def\sup{\mathop{\rm sup}\limits}
%\def\inf{\mathop{\rm inf}\limits}  \def\lim{\mathop{\rm lim}\limits}
%\def\Lim{\mathop{\rm Lim}\limits}  \def\stm{\setminus}  \def\xp{{\rm xp}}
%\def\toto{\stackrel{\longrightarrow}{\scriptstyle\longrightarrow}}
%\def\lra{\Leftrightarrow}    \def\la{\Leftarrow}  \def\ra{\Rightarrow}
%\def\wg{\wedge} \def\lil{\lim\limits} \def\lr{{\rm lr}} \def\diam{{\rm diam}\,}
%\def\Fr{{\rm fr}\,} \def\Cr{\mathrm{cr}}
%\def\supl{\sup\limits}
%\def\infl{\inf\limits}
%%%%%%%%%%%%%%%%%%%%%%%%%%%%%%%%%%%%% Other %%%%%%%%%%%%%%%%%%%%%%%%%%%%%%%%%%
%\def\dis{\displaystyle}\def\~{\tilde}\def\wt{\widetilde}
%\def\#{\prec}\def\ua{U^\alpha}\def\ub{U^\beta}\def\va{V^\alpha}
%\def\vb{V^\beta}\def\lt{\ell_\iy(T)}\def\bigtu{\bigtriangleup}
\def\ov{\overline}\def\wt{\widetilde} \def\iy{\infty}
%\def\0{\varnothing}\def\ts{\textstyle}\def\Cr{\mathrm{cr}\,}
%\def\CL{CL}\def\KL{KL}\def\LL{LL}
%\def\olim{\mathop{\overline{\mathrm{Lim}}}\limits}
%\def\iu{^{\vee}}
%\def\iul{^{\vee\hspace{-0,3em}\wedge}}
%\def\iulu{^{\vee\hspace{-0,3em}\wedge\hspace{-0,3em}\vee}}
%\def\il{^{\wedge}}
%\def\ilu{^{\wedge\hspace{-0,3em}\vee}}
%\def\ilul{^{\wedge\hspace{-0,3em}\vee\hspace{-0,3em}\wedge}}
%\def\ra{\Rightarrow}
%\def\intl{\int\limits} \def\tod{\stackrel{d}{\to}}
%\def\pperp{^{\perp\hspace{-0,4em}\perp}}

%%%%%%%%%%%%%%%%%%%%%%%%%%%%%%%%%%%%%%%%%%%%%%%%%%%%%%%%%%%%%%%%%%%%%%%%%%%%%%%%%%%%%
%%%%%%%%%%%%%%%%%%%%%%%%%%%%%%%%%%%%%%%%%%%%%%%%%%%%%%%%%%%%%%%%%%%%%%%%%%%%%%%%%%%%%
%%%%%%%%%%%%%%%%%%%%%%%%%%%%%%%%%%%%%%%%%%%%%%%%%%%%%%%%%%%%%%%%%%%%%%%%%%%%%%%%%%%%%
%%%%%%%%%%%%%%%%%%%%%%%%%%%%%%%%%%%%%%%%%%%%%%%%%%%%%%%%%%%%%%%%%%%%%%%%%%%%%%%%%%%%%
%%%%%%%%%%%%%%%%%%%%%%%%%%%%%%%%%%%%%%%%%%%%%%%%%%%%%%%%%%%%%%%%%%%%%%%%%%%%%%%%%%%%%

\section{Introduction}

Separately continuous functions have been intensively studied the last 120 years, starting with the seminal dissertation of R. Baire  \cite{Baire}. The separate continuity of a function  of many variables means  the continuity with respect to each variable. This is equivalent to the continuity of the restrictions of the function  onto every  affine line, parallel to a coordinate axis. Requiring the continuity of the restrictions of the function on every affine line, we obtain the definition of a  linearly continuous function.

More precisely, a function $f:X\to Y$ from a topological vector space $X$ to a topological space $Y$ is {\em linearly continuous} if for any $x,v\in X$ the function $\IR\to Y$, $t\mapsto f(x+vt)$, is continuous. All topological vector spaces appearing  in this paper are over the field $\IR$ of real numbers and are assumed to be Hausdorff.

In contrast to the extensive literature on separate continuity, the number of papers devoted to the linear continuity is relatively small.

Maybe for the first time, linearly continuous functions appeared in the paper  \cite{Genocchi_Peano} containing an example of a discontinuous linearly continuous function $f:\IR^2\to\IR$. This function is defined by the formula  $f(x,y)=\frac{2xy^2}{x^2+y^4}$ where $f(0,0)=0$. An example of a linearly continuous function which is discontinuous at points of some set of cardinality continuum was constructed in \cite{Young}.  Slobodnik in \cite{Slobodnik} proved that the set $D(f)$ of discontinuity points of a linearly continuous function $f:\IR^m\to\IR$ is a countable union of isometric copies of the graphs of Lipschitz functions $h:K \to\IR$ defined on compact nowhere dense subsets $K$ of $\IR^{m-1}$. On the other hand, by a result of Ciesielski and Glatzer\cite{Ciesielski_Glatzer}, a subset $E\subset \IR^m$ coincides with the set $D(f)$ of discontinuity points of some  linearly continuous function $f:\IR^m\to\IR$ if $E$ is the countable union of closed nowhere dense subsets of convex surfaces. A similar result was obtained earlier in the paper  \cite{2008_1}, containing also a characterization of the sets $D(f)$ of discontinuity points of linearly continuous functions $f:\IR^n\to\IR$ of the first Baire class in terms of $\bar\sigma$-$\ell$-miserability. In this paper we shall generalize this characterization to linearly continuous BP-measurable functions defined on any Baire cosmic vector space.

First we prove that any real-valued linearly continuous function on a finite-dimensional topological vector space is of the first Baire class.

A function $f:X\to Y$ between topological spaces is defined to be
\begin{itemize}
\item {\em of the first Baire class} if $f$ is a pointwise limit of a sequence of  continuous functions from $X$ to $Y$;
\item {\em of $n$-th Baire class} for $n\ge 2$ if $f$ is a pointwise limit of a sequence of functions of the $(n-1)$-th Baire class from $X$ to $Y$.
\end{itemize}

It is well-known \cite{Rudin, 2001_2, 2004_1} that for every $n\ge 2$, each separately continuous  function $f:\IR^n\to \IR$ is of the $(n-1)$-th Baire class, and  $(n-1)$ in this result cannot be replaced by a smaller number. This fact contrasts with the following surprising property of linearly continuous functions, which will be proved in Section~\ref{s:pf:t1}.

\begin{thm}\label{t1} Every linearly continuous function $f:X\to \IR$ on a finite-dimensional topological vector space $X$ is of the first Baire class.
\end{thm}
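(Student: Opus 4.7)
My plan is to deduce Theorem~\ref{t1} from the paper's main $F_\sigma$-measurability theorem (announced in the abstract) by using Slobodnik's structural theorem to verify the BP-measurability hypothesis, and then applying the Lebesgue--Hausdorff theorem to translate $F_\sigma$-measurability into membership in the first Baire class. Since every finite-dimensional Hausdorff topological vector space over $\IR$ is linearly homeomorphic to $\IR^m$ for some $m\ge 0$, I first reduce to the case $X=\IR^m$, which is a Baire cosmic topological vector space.

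To verify BP-measurability of $f$, I would invoke Slobodnik's theorem (cited above): $D(f)=\bigcup_{n}\Gamma_n$, where each $\Gamma_n$ is an isometric copy of the graph of a Lipschitz function on a compact nowhere-dense subset of $\IR^{m-1}$. Hence $D(f)$ is an $F_\sigma$ set of first category, the continuity set $C(f)=\IR^m\setminus D(f)$ is a dense $G_\delta$, and $f|_{C(f)}$ is continuous. For any open $G\subset\IR$, the decomposition
\[
f^{-1}(G)=\bigl(f^{-1}(G)\cap C(f)\bigr)\cup\bigl(f^{-1}(G)\cap D(f)\bigr)
\]
expresses $f^{-1}(G)$ as the union of the trace on $C(f)$ of some open subset of $\IR^m$ (which has the Baire property) and a subset of the meager set $D(f)$ (which also has the Baire property). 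Therefore $f$ is BP-measurable.

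Now I invoke the paper's main theorem: a BP-measurable linearly continuous map from a Baire cosmic topological vector space to a Tychonoff space is $F_\sigma$-measurable. Applied to $f\colon\IR^m\to\IR$, this gives that $f^{-1}(G)$ is an $F_\sigma$ subset of $\IR^m$ for every open $G\subset\IR$, and the Lebesgue--Hausdorff theorem then identifies $F_\sigma$-measurable real-valued functions on a metric space with first-Baire-class functions.

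The crucial step is the invocation of the main $F_\sigma$-measurability theorem: BP-measurability alone is insufficient for Baire~1 (e.g.\ $\chi_{\mathbb Q^m}$ is BP-measurable but not Baire~1), so the upgrade must genuinely exploit the continuity of $f$ on every affine line, and that is the substantive content of the paper. The remaining pieces are either routine (reduction to $\IR^m$, application of Lebesgue--Hausdorff) or short reductions resting on the nontrivial theorem of Slobodnik.
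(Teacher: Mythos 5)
Your proof is correct, but it takes a genuinely different route from the paper's. The paper proves Theorem~\ref{t1} directly in Section~\ref{s:pf:t1} by an averaging argument: after identifying $X$ with $\IR^m$ and arranging $f(X)\subset(0;1)$, it observes that $f$ is separately continuous, hence of Baire class $m-1$ by Rudin's theorem and in particular Borel-measurable, and then defines $f_n(x)=\tfrac{1}{\mu(B)}\int_B f(x+\tfrac{u}{n})\,d\mu(u)$; each $f_n$ is continuous, and linear continuity along the rays $t\mapsto x+tu$ together with Lebesgue's dominated convergence theorem gives $f_n\to f$ pointwise, so $f$ is of the first Baire class. You instead extract BP-measurability from Slobodnik's structural theorem (so that $D(f)$ is a meager $F_\sigma$-set and the standard decomposition of $f^{-1}(G)$ over $C(f)$ and $D(f)$ applies), feed this into Theorem~\ref{t2}, and finish with the Lebesgue--Hausdorff (or Fosgerau) identification of $F_\sigma$-measurable real-valued functions with Baire-one functions. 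This is not circular --- Theorem~\ref{t2} is proved via Lemma~\ref{l:BP-cqc} and Theorem~\ref{t:cqc=>F} with no reference to Theorem~\ref{t1} --- but it routes a short statement through the heaviest machinery of the paper plus an external structure theorem. Note also that BP-measurability is available much more cheaply, exactly as the paper itself uses it to make the integral legitimate: linear continuity implies separate continuity, which by the Lebesgue--Rudin theorem already gives Baire class $m-1$, hence Borel- and BP-measurability, with no appeal to Slobodnik. What your approach buys in exchange is generality: it works verbatim for maps into any separable metrizable connected locally path-connected target, whereas the averaging argument is tied to real-valued (or at least vector-valued) functions and to the presence of Lebesgue measure on the finite-dimensional domain.
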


Taking any discontinuous linear functional on an infinite-dimensional Banach space, we see that Theorem~\ref{t1} does not generalize to infinite-dimensional topological vector spaces. However, it is still true for BP-measurable linearly continuous functions between Baire cosmic vector spaces.

By a {\em cosmic vector space} we understand a topological vector space, which is a continuous image of a separable metrizable space. By \cite[Theorem 1]{BH} (or \cite[Lemma 5.1]{BR19}), every Baire cosmic topological group is separable and metrizable. This implies that every Baire cosmic vector space is separable and metrizable.

%A topological space is called {\em perfectly normal} if $X$ is normal and each open subset of $X$ is of type $F_\sigma$. It is easy to see that a topological space $X$ is perfectly normal if and only if each open subset of $X$ is functionally open. A topological space $X$ is Tychonoff if and only if it is a $T_1$-space and has a base of the topology consisting of functionally open sets.

We recall \cite{Kechris} that a subset $A$ of a topological space $X$ has {\em the Baire property} in $X$ if there exists an open set $U\subset X$ such that the symmetric difference $U\triangle A$ is meager in $X$. It is well-known \cite{Kechris} that the family of sets with the Baire property in a topological space $X$ is a $\sigma$-algebra containing the $\sigma$-algebra of Borel subsets of $X$.

A subset $U$ of a topological space $X$ is called  {\em functionally open} if $U=f^{-1}(V)$ for some continuous function $f:X\to\IR$ and some open set $V\subseteq \IR $. Observe that an open subset of a normal space is functionally open if and only if it is of type $F_\sigma$. A subset $A$ of a topological space is {\em functionally closed} if its complement $X\setminus A$ is functionally open in $X$.% A topological space $X$ is called {\em perfectly normal} if it is normal and each open set in $X$ is of type $F_\sigma$. It is well-known that a $T_1$-topological space $X$ is perfectly normal if and only if every open subset of $X$ is functionally open.

A function $f:X\to Y$ between topological spaces is called
\begin{itemize}
\item {\em $F_\sigma$-measurable} if for any functionally open set $U\subset Y$ the preimage $f^{-1}(U)$ is of type $F_\sigma$ in $X$.
\item {\em Borel-measurable} if for any functionally open set $U\subset Y$ the preimage $f^{-1}(U)$ is a Borel subset of $X$;
\item {\em BP-measurable} if for any functionally open set $U\subset Y$ the preimage $f^{-1}(U)$ has the Baire property in $X$.
\end{itemize}
It follows that each $F_\sigma$-measurable function is Borel-measurable and each Borel-measurable function is BP-measurable. By Theorem 1 of \cite{Fosgerau}, a function $f:X\to Y$ from a metrizable space $X$ to a connected locally path-connected separable metrizable space $Y$ is $F_\sigma$-measurable if and only if $f$ is of the first Baire class.

Now we can formulate one of the principal results of this paper.

\begin{thm}\label{t2} Every BP-measurable linearly continuous function $f:X\to Y$ from a Baire cosmic vector space $X$ to a Tychonoff space $Y$ is $F_\sigma$-measurable. If\/ $Y$ is a  separable, metrizable, connected and locally path-connected, then  $f$ is of the first Baire class.
\end{thm}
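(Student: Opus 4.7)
The plan is to reduce the theorem to showing that upper level sets $\{h>a\}$ of a real-valued BP-measurable linearly continuous function $h$ are $F_\sigma$ in $X$. Given any functionally open $V\subseteq Y$, write $V=g^{-1}(W)$ for a continuous $g:Y\to\IR$ and an open $W\subseteq\IR$. The composition $h:=g\circ f$ is linearly continuous (composition of a continuous map with a linearly continuous one) and BP-measurable (preimages of open subsets of $\IR$ under $h$ are preimages of functionally open subsets of $Y$ under $f$). Writing $W$ as a countable union of open intervals, each a finite intersection of half-lines, reduces the first assertion to proving that $\{h>a\}$ is $F_\sigma$ for every $a\in\IR$. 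The second assertion then follows immediately from Fosgerau's theorem recalled just before Theorem~\ref{t2}.

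Next I would pass from BP-measurability to a strong pointwise regularity. Since $X$ is a Baire cosmic vector space it is separable and metrizable; picking a countable base $\{V_n\}$ of $\IR$ and writing $h^{-1}(V_n)=U_n\triangle M_n$ with $U_n$ open and $M_n$ meager in $X$, the set $G:=X\setminus\bigcup_n M_n$ is a dense $G_\delta$ on which $h$ is continuous. Combining this with linear continuity, I would then show that $h$ is quasi-continuous at every point $x\in X$: given $\e>0$, the continuity of $t\mapsto h(x+tv)$ at $0$ together with a Baire-category argument over directions lets me pick a direction $v$ and an arbitrarily small $t>0$ with $x+tv\in G$ and $|h(x+tv)-h(x)|<\e/2$; full continuity of $h$ at $x+tv$ then supplies a nonempty open set inside any prescribed neighborhood of $x$ on which $h$ stays $\e$-close to $h(x)$. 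This is precisely the \emph{conical quasi-continuity} flagged in the paper's keywords.

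Pointwise quasi-continuity implies $\{h>a\}\subseteq\overline{\operatorname{int}\{h>a\}}$, so $\{h>a\}$ splits as $\operatorname{int}\{h>a\}$ together with a meager residue lying in the boundary of that interior. To upgrade this to an $F_\sigma$ presentation, I would invoke the $\bar\sigma$-$\ell$-miserability machinery announced in the paper's keywords: the discontinuity locus $D(h)$, which contains the residue, decomposes as a countable union of closed sets each meeting every line in a nowhere dense set, and each such closed piece can be partitioned into its intersection with $\{h>a\}$ and its intersection with $\{h\le a\}$ in a way controlled by the linear continuity of $h$, exhibiting $\{h>a\}$ as a countable union of closed sets. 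I expect the main obstacle to be this last step: quasi-continuity by itself does not imply $F_\sigma$ preimages, and the full strength of linear continuity, not merely comeager continuity, must be exploited to obtain a $\bar\sigma$-$\ell$-miserable decomposition of $D(h)$ that is compatible with the level sets of $h$.
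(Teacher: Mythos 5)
Your reduction to the real-valued case and the identification of conical quasi-continuity as the right intermediate property are sound, and they match the skeleton of the paper's argument (which proves: BP-measurable $+$ linearly continuous $\Rightarrow$ conically quasi-continuous $\Rightarrow$ $F_\sigma$-measurable, then applies Fosgerau). But there are two genuine gaps. First, in your second paragraph you pass from ``$h^{-1}(V_n)=U_n\triangle M_n$'' to a comeager set $G$ ``on which $h$ is continuous'' and then invoke \emph{full continuity of $h$ at $x+tv$} to produce an open set in $X$ on which $h$ is $\varepsilon$-close to $h(x)$. BP-measurability only gives that the \emph{restriction} $h{\restriction}G$ is continuous as a function on the subspace $G$; it does not give continuity of $h$ at the points of $G$ as a function on $X$, so no nonempty open subset of $X$ is produced and quasi-continuity does not follow this way. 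The paper's Lemmas~\ref{l:BP=>G} and \ref{l:BP-cqc} circumvent exactly this: they never claim pointwise continuity anywhere, but instead locate, inside each $x$-conical open set, a dense $G_\delta$-set mapped into a prescribed functionally open neighborhood, and derive conical quasi-continuity by intersecting two such dense $G_\delta$-sets in a Baire open set to reach a contradiction.

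Second, and more seriously, your final step is not an argument. Quasi-continuity gives $\{h>a\}\subseteq\overline{\mathrm{int}\,\{h>a\}}$, but, as you yourself note, this does not yield an $F_\sigma$ presentation, and the route you propose through the $\bar\sigma$-$\ell$-miserability of $D(h)$ is circular with respect to the paper's development: Lemma~\ref{l:miser1} \emph{derives} the $\bar\sigma$-$\ell$-miserable decomposition of $D(h)$ \emph{from} the $F_\sigma$-measurability of $h$, which is what you are trying to prove. The paper closes this gap by a quite different device (Theorem~\ref{t:cqc=>F}): assuming $f^{-1}(G)$ is not $F_\sigma$, it uses a countable base $\{B_n\}$ and the associated $0$-conical sets $\check B_n=(0;1]\cdot B_n$ to find a set $A_{m,n}$ that cannot be separated from the complement by an $F_\sigma$-set, takes a point $y$ of $\overline{A_{m,n}}$ outside $f^{-1}(G)$, and uses conical quasi-continuity \emph{at $y$} to produce two overlapping conical sets with disjoint images --- a contradiction. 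This is where the full strength of the conical (not merely ordinary) quasi-continuity is consumed, and it is the step your proposal leaves unproven.
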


The proof of Theorem~\ref{t2} consists of two steps: first we establish that every BP-measurable linearly continuous function  on a Baire topological vector space is conically quasi-continuous, and then prove that every  conically quasi-continuous function  on a second-countable topological vector space is $F_\sigma$-measurable.

The conical quasi-continuity is defined for functions on topological vector spaces and is a modification of the quasi-continuity that takes into account the linear structure of the domain of the function.

Let us recall \cite{Neubrunn} that a function $f:X\to Y$ between topological spaces is {\em quasi-continuous} if for every point $x\in X$, neighborhood ${V}\subset X$ of $x$ and neighborhood ${W}\subset Y$ of $f(x)$, there exists a nonempty open set $U\subset {V}$ such that $f(U)\subset {W}$. Observe that a function $f:X\to Y$ is quasi-continuous if and only if for any open set $U\subset Y$ the preimage $f^{-1}(U)$ is quasi-open in the sense that the interior $f^{-1}(U)^\circ$ of $f^{-1}(U)$ is dense in $f^{-1}(U)$. This implies that every quasi-continuous function is BP-measurable.

A  subset $U$ of a topological vector space $X$ is called {\em conical at a point} $x\in X$ (or else {\em $x$-conical\/}) if $U\ne\emptyset$  and for every $u\in U$ the open segment $(x;u):=\big\{(1-t)x+tu:0<t<1\big\}$ is contained in $U$. It follows that each $x$-conical set contains $x$ in its closure.

A function $f:X\to Y$ from a topological vector space $X$ to a topological space $Y$ is called {\em conically quasi-continuous} if for any point $x\in X$, $x$-conical open set $V\subset X$ and open neighborhood ${W}\subset Y$ of $f(x)$, there exists an $x$-conical open set $U\subset V$ such that $f(U)\subset {W}$. It is easy to see that every conically quasi-continuous function on a topological vector space is quasi-continuous.  The converse is true for linearly continuous functions  on Baire topological vector spaces.

\begin{thm}\label{t3} For a linearly continuous function $f:X\to Y$ from a Baire topological vector space $X$ to a Tychonoff space $Y$ the following conditions are equivalent:
\begin{enumerate}
\item $f$ is conically quasi-continuous;
\item $f$ is quasi-continuous;
\item $f$ is BP-measurable.
\end{enumerate}
\end{thm}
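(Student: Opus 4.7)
The implications $(1)\Ra(2)$ and $(2)\Ra(3)$ are immediate from observations already made in the paper: given a neighbourhood $V'$ of $x$, a balanced convex open neighbourhood $B$ of $0$ with $x+B\subseteq V'$ provides an $x$-conical open set $(x+B)\setminus\{x\}\subseteq V'$ to feed into the definition of conical quasi-continuity, proving $(1)\Ra(2)$; while $(2)\Ra(3)$ follows because the preimage under a quasi-continuous function of any open set is quasi-open, hence has the Baire property.

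For the substantive direction $(3)\Ra(1)$, fix $x\in X$, an $x$-conical open $V\subseteq X$, and an open neighbourhood $W$ of $f(x)$ in $Y$. Using that $Y$ is Tychonoff, pick a continuous $g\colon Y\to[0,1]$ with $g(f(x))=1$ and $g\equiv 0$ on $Y\setminus W$, and let $h:=g\circ f\colon X\to[0,1]$; then $h$ is linearly continuous and BP-measurable, and it suffices to find an $x$-conical open $U\subseteq V$ on which $h>0$. Translating, I assume $x=0$, so $h(0)=1$. For each $n\in\IN$ put
\[
E_n:=\{v\in V:\ h(tv)>0\text{ for all }t\in[0,1/n]\}.
\]
By compactness of $[0,1/n]$ and continuity of $h$ along each line through $0$, one may rewrite $E_n=\bigcup_k\bigcap_{t\in D}\{v\in V:\ h(tv)\ge 1/k\}$ for a fixed countable dense $D\subseteq[0,1/n]$; every set in the intersection is BP, since $v\mapsto h(tv)$ is the composition of the BP-measurable $h$ with a homeomorphism of $X$ (when $t\neq 0$) or a constant map (when $t=0$). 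Hence each $E_n$ has the Baire property in $V$. Since $h(0)=1$ and $h$ is continuous on every line through $0$, one has $V=\bigcup_n E_n$; as $V$ is Baire (a nonempty open subset of the Baire space $X$), some $E_N$ is non-meager in $V$, and writing $E_N=\Omega\triangle M$ with $\Omega$ open in $V$ and $M$ meager in $V$ gives a nonempty open $\Omega\subseteq V$ in which $E_N$ is comeager.

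The remaining step is to convert this comeager statement into an honest $x$-conical open subset of $A:=\{h>0\}$. The natural candidate is
\[
U:=\bigcup_{t\in(0,1/N)}t\Omega=\{tv:\ v\in\Omega,\ t\in(0,1/N)\},
\]
which is open (a union of the homothetic copies $t\Omega$, each open since $v\mapsto tv$ is a homeomorphism), $0$-conical by construction, and contained in $V$ (since $V$ is $0$-conical and $\Omega\subseteq V$). Any point $tv\in U$ with $v\in\Omega\cap E_N$ lies in $A$ by the defining property of $E_N$. The delicate task is to rule out points $tv^*\in U$ coming from the meager set $\Omega\setminus E_N$, since $h(tv^*)$ need not a priori be positive. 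I would handle this by combining the line-closedness of $\{h=0\}$ (a consequence of linear continuity of $h$) with a Kuratowski--Ulam-type argument applied to the open continuous surjection $\psi\colon\Omega\times(0,1/N)\to U$, $\psi(v,t)=tv$, to show first that the exceptional set $U\cap\{h=0\}$ is meager in $U$, and then to upgrade this "meager" to "empty" by an intermediate-value argument along the line through $0$ and any hypothetical bad point, possibly after shrinking $\Omega$ and iterating the construction once. This final "meager $\Ra$ empty" upgrade, which must reconcile a meager exceptional set with a pointwise requirement using only line-continuity, is the main obstacle.
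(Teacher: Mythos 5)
Your implications $(1)\Rightarrow(2)\Rightarrow(3)$ are essentially the paper's Lemmas on conical quasi-continuity and quasi-continuity, with one small slip: in a general (not necessarily locally convex) topological vector space there need not exist a \emph{convex} open neighbourhood $B$ of $0$ inside a given neighbourhood; a \emph{balanced} open neighbourhood suffices for your purpose (and the paper avoids the issue entirely by producing an $x$-conical open subset of any neighbourhood of $x$ directly from the continuity of $(z,t)\mapsto tz+(1-t)x$ and compactness of $[0;1]$). The first half of your argument for $(3)\Rightarrow(1)$ also matches the paper: your sets $E_n$ are exactly its sets $V_n=\{v\in V: x+[0;\tfrac1n]\cdot v\subset f^{-1}(W)\}$, and your verification that each $E_n$ has the Baire property (via a countable dense set of parameters $t$) is in fact more careful than the paper on this point. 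Up to the production of a nonempty open $\Omega$ in which $E_N$ is comeager, and of the $0$-conical open set $U=\bigcup_{0<t<1/N}t\Omega$ containing a dense $G_\delta$-set on which $h>0$, you are on the paper's track.

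The genuine gap is precisely the step you flag as ``the main obstacle'': you have no mechanism for upgrading ``$h>0$ on a comeager subset of $U$'' to ``$h>0$ on all of $U$'', and the tools you propose (Kuratowski--Ulam on $\psi(v,t)=tv$, an intermediate-value argument along a single line) will not do it --- meagerness of $U\cap\{h=0\}$ gives no information about an individual point, and line-continuity along the ray through one bad point only controls $h$ near $0$, not at the bad point itself. The paper closes this gap by a \emph{second application of the same category argument at the hypothetical bad point}, combined with shrinking the target in advance: choose a functionally open $W$ with $\overline{W}\subset O$ and aim only for $f(U)\subset\overline{W}$. If some $u\in U$ had $f(u)\notin\overline{W}$, pick a functionally open neighbourhood $W_u$ of $f(u)$ disjoint from $\overline{W}$, take a $u$-conical open $V_u\subset U$, and run the identical Baire-category construction at $u$ to obtain a $u$-conical open $U_u\subset V_u$ carrying a dense $G_\delta$-set $G_u$ with $f(G_u)\subset W_u$. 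Then $G_u$ and $U_u\cap G$ are two dense $G_\delta$-subsets of the Baire space $U_u$, so they meet, yet their $f$-images lie in the disjoint sets $W_u$ and $W$ --- a contradiction. This ``two dense $G_\delta$-sets must intersect'' device is the missing idea; without it (or an equivalent), your proof of $(3)\Rightarrow(1)$ is incomplete.
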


\begin{thm}\label{t:cqc=>F} Each conically quasi-continuous function $f:X\to Y$ from a second-countable topological vector space $X$ to a topological space $Y$ is  $F_\sigma$-measurable.
\end{thm}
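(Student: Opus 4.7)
The plan is to use conical quasi-continuity to sandwich $f^{-1}(U)$ between countable unions of closed sets built from a suitable countable family $\mathcal B$ of convex open subsets of $X$, exploiting second countability.

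Since $U\subseteq Y$ is functionally open, write $U=g^{-1}((0,1])$ for some continuous $g\colon Y\to[0,1]$ and set $V_n:=g^{-1}\bigl((\tfrac{1}{n+1},1]\bigr)$. Then each $V_n$ is functionally open in $Y$, $\overline{V_n}\subseteq V_{n+1}$, and $U=\bigcup_{n}V_n$. The decisive consequence of conical quasi-continuity is the following: if $U'\subseteq X$ is any $x$-conical open set with $f(U')\subseteq V_n$, then $f(x)\in\overline{V_n}\subseteq V_{n+1}$. Indeed, were some open $W\ni f(x)$ disjoint from $V_n$, conical quasi-continuity at $x$ applied to the $x$-conical open set $U'$ and to $W$ would produce a nonempty $x$-conical open $U''\subseteq U'$ with $f(U'')\subseteq W\cap V_n=\varnothing$, a contradiction. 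Combined with the classical convex-interior property in topological vector spaces (for convex open $B$, $y\in\overline B$, $u\in B$ one has $(y,u)\subseteq B$), this yields: whenever $B\subseteq X$ is a convex open set with $f(B)\subseteq V_n$, every $y\in\overline B$ has $B$ as a $y$-conical open witness, so $\overline B\subseteq f^{-1}(V_{n+1})$.

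Using second countability, I would fix a countable family $\mathcal B$ of convex open subsets of $X$ rich enough that for every $x\in X$ and every convex open $\Lambda\subseteq X$ with $x\in\overline\Lambda$, some $B\in\mathcal B$ satisfies $B\subseteq\Lambda$ and $x\in\overline B$. Define $\tilde A_n:=\bigcup\{\overline B:B\in\mathcal B,\,f(B)\subseteq V_n\}$, a countable union of closed sets, hence $F_\sigma$, and contained in $f^{-1}(V_{n+1})$ by the previous paragraph. For the reverse inclusion $f^{-1}(V_n)\subseteq\tilde A_n$, given $x\in f^{-1}(V_n)$, conical quasi-continuity supplies an $x$-conical open $U'\subseteq f^{-1}(V_n)$; picking $y\in U'$ and a convex open neighborhood $N\subseteq U'$ of $y$, the set $\Lambda:=\{(1-t)x+tn:t\in(0,1),\,n\in N\}$ is convex open, lies in $U'$, and satisfies $x\in\overline\Lambda$, so richness of $\mathcal B$ yields $B\in\mathcal B$ with $B\subseteq\Lambda$, $f(B)\subseteq V_n$, and $x\in\overline B\subseteq\tilde A_n$. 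Thus $f^{-1}(V_n)\subseteq\tilde A_n\subseteq f^{-1}(V_{n+1})$, and $f^{-1}(U)=\bigcup_{n}\tilde A_n$ is $F_\sigma$.

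The principal obstacle is the construction of the countable family $\mathcal B$ with the required richness property. A naive countable base of rational open balls fails, because a ball contained in a convex open $\Lambda$ does not in general contain a prescribed irrational boundary point $x\in\partial\Lambda$ in its closure. I would build $\mathcal B$ from a parameterized family of convex open cones of the form $\{(1-t)d+tw:t\in(0,1),\,w\in d'+U_0\}$, where $(d,d')$ ranges over a countable dense subset $D\subseteq X$ and $U_0$ over a countable local base at $0$ of convex open neighborhoods; such cones can be arranged to be tangent to every prescribed boundary point of every convex open $\Lambda$ from inside. In non-locally-convex second-countable topological vector spaces convex open neighborhoods of $0$ may be limited and this step demands further TVS-geometric adaptation, but the skeleton of the argument persists whenever the convex open cones $(x,N)$ above remain available.
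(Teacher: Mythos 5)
Your opening moves are correct and the central observation is genuinely nice: a convex open set $B$ is $y$-conical at \emph{every} point $y\in\overline{B}$, so conical quasi-continuity forces $f(\overline{B})\subseteq\overline{V_n}\subseteq V_{n+1}$ whenever $f(B)\subseteq V_n$, and any union of closures of such sets is squeezed between $f^{-1}(V_n)$ and $f^{-1}(V_{n+1})$. But the proof then funnels entirely through the countable family $\mathcal B$ with your ``richness'' property, and that property is not merely hard to arrange --- it is provably impossible, already for $X=\IR$. Take any $x\in\IR$ and $\Lambda=(x;x+1)$: a convex open $B\subseteq\Lambda$ with $x\in\overline{B}$ must be an interval whose left endpoint is exactly $x$, so $\mathcal B$ would have to contain, for each of the uncountably many reals $x$, an interval with endpoint $x$. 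The same obstruction kills the proposed repair by cones with apexes in a countable dense set: in your construction $x$ lies on the boundary of the open cone $\Lambda$ (an open cone with apex $x$ does not contain $x$), hence any witness $B\subseteq\Lambda$ has $x\in\partial B$, and a countable family of convex open sets fixed in advance cannot have its boundaries pass through every required point. This is not an artifact of over-asking in the richness property: for $f=\chi_{[x;x+1)}$ (which is conically quasi-continuous) the only convex open sets $B$ with $f(B)\subseteq V_n$ and $x\in\overline{B}$ are intervals of the form $(x;c)$, so uncountably many distinct members of $\mathcal B$ really are forced. A second, independent defect is the reliance on convex open sets at all: the theorem has no local convexity hypothesis, and in a non-locally-convex second-countable space such as $L^p[0;1]$ with $0<p<1$ the only nonempty convex open set is the whole space, so neither $\mathcal B$ nor the cones $(x;N)$ are available.

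The paper avoids both problems by not attempting to realize $f^{-1}(V_n)$ as an explicit union of closures. It argues by contradiction: if $A=f^{-1}(G)$ is not $F_\sigma$, then after writing $G=\bigcup_mG_m$ some $A_m=f^{-1}(G_m)$ cannot be separated from $X\setminus A$ by an $F_\sigma$-set; it then uses the countable family of $0$-conical (generally non-convex) sets $\check B_n=(0;1]\cdot B_n$ built from a countable base --- these exist in every topological vector space --- to split $A_m=\bigcup_nA_{m,n}$ according to which translated cone $x+\check B_n$ is mapped into $G_m$, picks a point $y\in\overline{A_{m,n}}\setminus A$, and derives a contradiction from two overlapping translated cones with disjoint images. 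The crucial difference is that only the \emph{shape} of the conical witness needs to come from a countable list; the apex is translated to each of the uncountably many relevant points, which is exactly what your exact-boundary-membership requirement cannot accommodate. To salvage your direct construction you would in effect have to prove $\overline{A_{m,n}}\subseteq f^{-1}(\overline{G_m})$ for these non-convex cones, and that is precisely the paper's two-cone intersection argument.
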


Theorems~\ref{t2}, \ref{t3} and \ref{t:cqc=>F} will be proved in Sections~\ref{s:t2}, \ref{s:BP} and \ref{s:tF}, respectively.

Now we discuss a characterization of sets $D(f)$ of discontinuity points of linearly continuous functions $f:X\to Y$ on Baire cosmic vector spaces, extending the characterization given by  the second author in  \cite{2008_1}.

A subset $V$ of a topological vector space $X$ is called an {\em $\ell$-neighborhood} of a set $A\subset X$ if for any $a\in A$ and $v\in X$ there exists $\e>0$ such that $a+[0;\e)\cdot v\subset V$.

Following \cite{2008_1}, we define a subset $A$ of a topological vector space $X$ to be
\begin{itemize}
\item {\em $\ell$-miserable} if $A\subset \overline{X\setminus L}$ for some closed $\ell$-neighborhood $L$ of $A$ in $X$;
\item {\em $\bar\sigma$-$\ell$-miserable}  if $A$ is a countable union of closed $\ell$-miserable sets in $X$.
%\item {\em $c$-nowhere dense} if there is an open convex set $U$ such that $A$ is nowhere dense in $\partial U$;
%\item {\em $\bar\sigma$-$c$-nowhere dense}  if $A$ is a countable union of closed $c$-nowhere dense sets in $X$;
\end{itemize}
The definition implies that each  $\bar\sigma$-$\ell$-miserable set in a topological vector space is of type $F_\sigma$.

There are many results describing the sets of discontinuity points of functions from various function classes  (see for example \cite{BreckenridgeNishiura, Grande,  Kershner, 2007_4,  2009_2, Maslyuchenko_Mykhaylyuk_1992, 2001_2})
In Section~\ref{s:miser} we shall prove the following description of the sets of discontinuity points of linearly continuous functions.

\bigskip
\begin{thm}\label{t:miser}\hspace{4cm}
\begin{enumerate}
\item For any BP-measurable linearly continuous function $f:X\to Y$ from a Baire cosmic vector space $X$ to a separable metrizable space $Y$, the set $D(f)$ of discontinuity points of $f$ is $\bar\sigma$-$\ell$-miserable in $X$.
\item For any $\bar\sigma$-$\ell$-miserable set $M$ in a metrizable topological vector space $X$, there exists a lower semicontinuous (and hence $F_\sigma$-measurable) linearly continuous function ${f:X\to[0;1]}$ such that $D(f)=M$.
\end{enumerate}
\end{thm}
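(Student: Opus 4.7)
\textbf{Part (1).} My plan is to invoke Theorem~\ref{t2} to get $F_\sigma$-measurability of $f$ and Theorem~\ref{t3} to get its quasi-continuity. Fix a countable dense set $\{y_i\}_{i\in\IN}\subset Y$ and a compatible metric $\rho$ on $Y$. For each $i,k\in\IN$, $F_\sigma$-measurability produces a decomposition
$$f^{-1}(B(y_i,1/k)) = \bigcup_{n\in\IN} F_{i,k,n}$$
with $F_{i,k,n}$ closed, and I set $M_{i,k,n} := F_{i,k,n}\setminus\mathrm{int}(f^{-1}(B(y_i,3/k)))$, which is closed. A standard oscillation argument then gives $D(f)\subset\bigcup_{i,k,n}M_{i,k,n}$: for $x\in D(f)$ with $\osc(f,x)\ge\e$, pick $k>6/\e$ (so $\mathrm{diam}\,B(y_i,3/k)=6/k<\e$ forces $x$ out of the relevant interior), pick $i$ with $f(x)\in B(y_i,1/k)$, and pick $n$ with $x\in F_{i,k,n}$.

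To see each $M_{i,k,n}$ is $\ell$-miserable I will use $L:=\overline{f^{-1}(B(y_i,2/k))}$. Linear continuity of $f$ makes $f^{-1}(B(y_i,2/k))$ an $\ell$-neighborhood of every one of its points (the map $t\mapsto f(a+tv)$ is continuous and starts in $B(y_i,2/k)$ for $a\in f^{-1}(B(y_i,1/k))$), so $L$ is a closed $\ell$-neighborhood of $M_{i,k,n}$. For $M_{i,k,n}\subset\overline{X\setminus L}$, fix $x\in M_{i,k,n}$ and any open $V\ni x$; since $x\notin\mathrm{int}(f^{-1}(B(y_i,3/k)))$, some $z\in V$ satisfies $f(z)\in W:=\{y:\rho(y,y_i)>2/k\}$, and $W$ is disjoint from $B(y_i,2/k)$. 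Quasi-continuity ensures $\mathrm{int}(f^{-1}(W))$ is dense in $f^{-1}(W)\ni z$, so $V$ contains some $w\in\mathrm{int}(f^{-1}(W))$; an open neighborhood of $w$ inside $f^{-1}(W)$ is disjoint from $f^{-1}(B(y_i,2/k))$, forcing $w\notin L$.

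\textbf{Part (2).} I write $M=\bigcup_{k}A_k$ with each $A_k$ closed $\ell$-miserable with witness $L_k$. The plan is to build, for each $k$, a LSC linearly continuous $f_k:X\to[0,1]$ with $D(f_k)=A_k$, and then put $f:=\sum_k 2^{-k}f_k$. The series converges uniformly (as $0\le f_k\le 1$), so $f$ is LSC (countable non-negative sum of LSC), linearly continuous (uniform limit of continuous functions on each affine line), and a short dominated-convergence argument on the partial sums gives $D(f)=\bigcup_k D(f_k)=M$. Fixing an invariant metric $\rho$ on $X$, a natural candidate is
$$f_k(x):=\begin{cases}0 & \text{if } x\in L_k,\\[2pt] \min\Bigl(1,\dfrac{\rho(x,L_k)}{\eta_k(\rho(x,A_k))}\Bigr) & \text{if } x\notin L_k,\end{cases}$$
where $\eta_k:(0,\infty)\to(0,\infty)$ is continuous, strictly increasing, with $\lim_{t\to 0^+}\eta_k(t)=0$. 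LSC is immediate; linear continuity will hold because the $\ell$-neighborhood property of $L_k$ forces $f_k$ to vanish on an initial segment of every line through each $a\in A_k$, while off $L_k$ the formula is a continuous ratio with positive denominator; continuity at each $x\in L_k\setminus A_k$ follows from $\rho(x,A_k)>0$.

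The main obstacle is the calibration of $\eta_k$ to make $f_k$ genuinely \emph{discontinuous} at every point of $A_k$: this requires, for each $a\in A_k$, a sequence $y_n\in X\setminus L_k$ with $y_n\to a$ and $\rho(y_n,L_k)\ge\eta_k(\rho(y_n,A_k))$ for infinitely many $n$, uniformly in $a$. A candidate is $\eta_k(t):=\inf\{\rho(y,L_k):y\in X\setminus L_k,\ \rho(y,A_k)\ge t\}$, after a regularization ensuring continuity and strict positivity on $(0,\infty)$; confirming that the approach sequences guaranteed by $A_k\subset\overline{X\setminus L_k}$ satisfy this inequality is where the full strength of the $\ell$-miserable hypothesis must be exploited. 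If a single $\eta_k$ does not suffice uniformly, I would partition each $A_k$ into countably many closed pieces on which the approach rate is uniform and handle them separately before reassembling them into the outer sum over $k$.
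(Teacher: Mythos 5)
Your Part (1) is essentially the paper's argument (Lemmas~\ref{l:miser1} and \ref{l:miser2}): the paper uses a countable base $\{V_n\}$ of $Y$, sets $E_n=f^{-1}(V_n)\setminus\overline{f^{-1}(V_n)}^\circ$ and the witness $L_n=\overline{f^{-1}(V_n)}$, while you use concrete balls $B(y_i,1/k)\subset B(y_i,2/k)\subset B(y_i,3/k)$; the roles of linear continuity (to make $L$ an $\ell$-neighborhood) and quasi-continuity are the same. One small omission: to conclude that $D(f)$ is $\bar\sigma$-$\ell$-miserable you need $D(f)=\bigcup_{i,k,n}M_{i,k,n}$, not just the inclusion $\subseteq$; the reverse inclusion does hold (any $x\in M_{i,k,n}$ has $\osc(f,x)\ge 2/k$), but you should say so.

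Part (2) has a genuine gap at its core, and you have correctly located it yourself. The whole difficulty of the statement is to produce, for a single closed $\ell$-miserable set $A_k$ with witness $L_k$, a lower semicontinuous linearly continuous $f_k$ that is actually \emph{discontinuous at every point} of $A_k$; everything else (the weighted sum, lower semicontinuity, linear continuity via the $\ell$-neighborhood, and $D(f)=\bigcup_k D(f_k)$ using $f_k\ge 0$ and $f_k=0$ on $A_k$) is routine. Your calibration function $\eta_k(t)=\inf\{\rho(y,L_k):y\in X\setminus L_k,\ \rho(y,A_k)\ge t\}$ does not work as written: points $y\notin L_k$ with $\rho(y,A_k)\ge t$ can lie arbitrarily close to $L_k$ (e.g.\ near parts of $\partial L_k$ far from $A_k$), so this infimum is typically $0$ for every $t>0$, and no regularization restoring strict positivity is specified. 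More fundamentally, no uniform lower bound of the form $\rho(y,L_k)\ge\eta(\rho(y,A_k))$ along approach sequences is guaranteed by the hypothesis $A_k\subseteq\overline{X\setminus L_k}$: the distance to $L_k$ of the approximating points may decay arbitrarily fast compared with their distance to $A_k$. The fallback ("partition $A_k$ into countably many pieces with uniform approach rate") is not worked out and it is not clear such a partition into \emph{closed} pieces exists.

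The paper avoids any quantitative calibration. In Lemma~\ref{lemLCFwithMiserableD(f)} it first produces (citing Proposition 2.2 and Corollary 2.4 of \cite{2009_2}) a set $A\subseteq X\setminus L$ with $\overline{A}\cap L=F$; then $\overline{A}\setminus F$ and $L\setminus F$ are disjoint closed subsets of the open set $X\setminus F$, so Urysohn's lemma gives a continuous $g:X\setminus F\to[0;1]$ equal to $1$ on $\overline A\setminus F$ and $0$ on $L\setminus F$, which is extended by $0$ on $F$. Discontinuity at each point of $F$ is then automatic, since $F\subseteq\overline A$ and the function is $1$ on $A$ but $0$ on $F$. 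This qualitative separation step --- choosing the approach set $A$ so that its closure meets $L$ exactly in $F$ --- is precisely the ingredient your distance-ratio construction is missing, and it is where the real content of Part (2) lies.
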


A function $f:X\to\IR$ is called {\em lower semi-continuous} if for any $a\in\IR$ the set $\{x\in X:f(x)>a\}$ is open in $X$.

Many examples of $\ell$-miserable and $\bar\sigma$-$\ell$-miserable sets can be constructed using the following theorem, proved in Section~\ref{s:tCovMis}.

\begin{thm}\label{t:CovMis}  Any nowhere dense subset of the boundary $\partial U$ of an open convex set $U$ in a normed space $X$ is $\ell$-miserable in $X$.
\end{thm}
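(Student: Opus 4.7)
The plan is to construct a closed $\ell$-neighborhood $L$ of $A$ with $A\subset\overline{X\setminus L}$ by setting $L=X\setminus G$ for a suitably chosen open set $G\subset U$. Under this ansatz, closedness of $L$ is automatic, the condition $A\subset\overline{X\setminus L}$ becomes $A\subset\overline G$, and the $\ell$-neighborhood condition becomes: for every $a\in A$ and $v\in X$ the ray $\{a+tv:t\in(0,\e)\}$ is disjoint from $G$ for some $\e=\e(a,v)>0$. Geometrically, $G$ should approach $A$ only \emph{tangentially}---never along a straight ray emanating from a point of $A$.

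First I would pass to the closure $\overline A$ in $X$, which still lies in $\partial U$ (since $\partial U$ is closed) and remains nowhere dense there; the same $L$ then works for any subset, so it suffices to treat $\overline A$. Next, for each $a\in A$, convexity of $U$ gives an open convex cone $C_a=\{v\in X:a+tv\in U \text{ for some }t>0\}$ (the \emph{inward cone} at $a$). Picking inward directions $w\in C_a$ close to the topological boundary of $C_a$ produces points $q=a+tw\in U$ whose displacement $q-a$ from $a$ is nearly tangent to $\partial U$ at $a$. I would then build $G=\bigcup_n B(q_n,\rho_n)$ as a union of small open balls centered at such tangentially placed points, choosing the $q_n$'s so that $A\subset\overline{\{q_n:n\in\IN\}}$ (guaranteeing $A\subset\overline G$) and choosing radii $\rho_n$ of strictly smaller order than the tangential displacement of $q_n$ from its approximated reference point in $A$ (so the balls lie in an ever thinner tangent layer along $\partial U$).

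The main obstacle is verifying the $\ell$-neighborhood property uniformly. For fixed $a\in A$ and $v\in X$, a ball $B(q_n,\rho_n)$ can meet the line $a+\IR v$ only when $\dist(q_n,\,a+\IR v)<\rho_n$. Since the $q_n$'s are tangentially displaced from $A$ and the $\rho_n$'s are chosen of yet smaller order, only finitely many balls can threaten any fixed ray from $a$, and every such intersection occurs at a positive time $t^{*}>0$ bounded below by the displacement scale of the corresponding ball. Choosing $\e(a,v)$ smaller than this lower bound yields the required disjointness. Two secondary subtleties remain: (i) the non-separable case, which is handled either by a Zornian maximal construction of a tangentially approaching system of balls, or by first reducing to a separable subspace generated by a countable approximation of $A$ together with the inward directions used above; and (ii) the possibility that distinct points of $A$ lie arbitrarily close, which is absorbed into the choice of $\rho_n$ by demanding $\rho_n$ be smaller than the distance from $q_n$ to every other reference point in $A$.
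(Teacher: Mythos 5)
Your ansatz ($L=X\setminus G$ for an open $G\subset U$ that approaches $A$ only tangentially) is exactly the one the paper uses, but the quantitative calibration at the heart of your construction is wrong, and the step where the $\ell$-neighborhood property is verified is asserted rather than proved. The rule ``$\rho_n$ of strictly smaller order than the tangential displacement of $q_n$'' enforces only \emph{first-order} tangency, and that is not enough. Concretely, take $X=\IR^2$, $U=\{(x,y):y>0\}$, $A=\{(0,0)\}$, and $q_n=\frac1n\,(1,\tfrac1{n^3})\in U$, which approaches $a=(0,0)$ along inward directions tending to the tangent direction $v=(1,0)$; the tangential displacement is $\approx 1/n$, so $\rho_n=1/n^2$ is permitted by your rule. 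But $\dist\big(q_n,\,a+[0,\infty)v\big)=1/n^4<\rho_n$, so every ball $B(q_n,\rho_n)$ meets the segment $a+(0,\e)v$ (at parameter $\approx 1/n\to 0$) for every $\e>0$, and $L$ fails to be an $\ell$-neighborhood of $A$ in the direction $v$. (These balls are not even contained in $U$.) The quantity that must dominate $\rho_n$ is not the tangential displacement but the \emph{normal} one, $d_{\partial U}(q_n)$, and in fact one needs second-order (parabolic) tangency: the balls must sit inside a region of the shape $\{x\in U: d_{\partial U}(x)<d_A(x)^2\}$. Your claim that ``only finitely many balls can threaten any fixed ray from $a$'' is precisely the statement requiring proof, and under your calibration it is false whenever the directions $(q_n-a)/\|q_n-a\|$ converge.

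The paper avoids enumerating balls altogether: with $S=\partial U$ it sets $G=\{x\in U: d_S(x)<(d_F(x))^2\}$ and $L=X\setminus G$, so $L$ is closed, contains $F$, and $F\subset\overline{S\setminus F}\subset\overline G$ by nowhere density. The $\ell$-neighborhood property then follows from a two-scale comparison that is missing from your verification: if a segment from $a\in F$ meets $G\subset U$ at $a+v$, choose $\delta>0$ with $a+v+\delta B\subset U$ ($B$ the open unit ball); convexity of $U$ gives $a+tv+t\delta B\subset U$, hence the \emph{linear} lower bound $d_S(a+tv)\ge t\delta$, while $d_F(a+tv)\le t\|v\|$ gives the \emph{quadratic} upper bound $d_F(a+tv)^2\le t^2\|v\|^2$; these are incompatible with membership in $G$ for $0<t<\delta/\|v\|^2$. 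This convexity-based linear-versus-quadratic estimate is the idea your sketch needs; adopting it also makes your secondary worries (separability, nearby reference points of $A$) evaporate, since no countable system of balls is required.
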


For finite-dimensional normed spaces, Theorem~\ref{t:CovMis} was proved in  \cite{2008_1} (see also Theorem 3.4 in \cite{Ciesielski_Glatzer}).

%Note that we do not know the answer to the following question: Is there exist $\bar\sigma$-$\ell$-miserable set in $\IR^2$ which is not $\bar\sigma$-$c$-nowhere dense?

Finally, we discuss one extension result showing that from the view-point of Descriptive Set Theory,  BP-measurable linearly continuous maps are not better than functions of the first Baire class.

A subset $C$ of a vector space $X$ is called {\em strictly convex} if for any distinct points $x,y\in \overline{C}$ the open segment $(x;y)$ is contained in $C$.

\begin{thm}\label{t:1.6} Let $K$ be a $\sigma$-compact functionally closed subset of a topological vector space $X$. If $K$ is contained in the boundary $\partial U$ of some strictly convex open set $U\subset X$, then every $F_\sigma$-measurable function $f:K\to Y$ to a  Banach space $Y$ can be extended to an $F_\sigma$-measurable linearly continuous function $\bar f:X\to Y$.
\end{thm}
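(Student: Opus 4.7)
\medskip
\noindent\textbf{Proof plan for Theorem~\ref{t:1.6}.}
The central geometric fact is that, since $U$ is strictly convex and open, every affine line $\ell\subset X$ meets $\partial U$ (and hence $K$) in at most two points: indeed, if $\ell$ contained three collinear points of $\partial U$, the middle one would lie in $U$ by strict convexity, a contradiction. The plan is to exploit this with a $\sigma$-compact exhaustion of $K$ and a Minkowski-type radial extension. Write $K=\bigcup_{n\in\IN}K_n$ as an increasing union of compact sets and fix an interior point $x_0\in U$. Replacing $U$ by its intersection with a large bounded strictly convex open neighborhood of $x_0$ containing $K_n$ in its interior (such intersections are again strictly convex and open), one may arrange that $U$ is bounded without losing $K_n\subset\partial U$, and hence has a continuous radial retraction $r:X\setminus\{x_0\}\to\partial U$ along rays from $x_0$ and a continuous radial coordinate $\lambda:X\to[0,\infty)$ with $\lambda(x_0)=0$ and $\lambda^{-1}(1)=\partial U$.

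Next I would use that $f$ is $F_\sigma$-measurable into the Banach space $Y$ and that each $K_n$ is compact to write $f|_{K_n}$ as a pointwise limit of continuous functions $g_{n,m}:K_n\to Y$, uniformly bounded in $m$. Using compactness of $K_n$, functional closedness of $K$, and a Tietze/Dugundji-type argument, extend each $g_{n,m}$ to a continuous $\widetilde g_{n,m}:\partial U\to Y$ whose $\partial U$-support shrinks to $K_n$ as $m\to\infty$. Set
\[
G_{n,m}(x)=\phi(\lambda(x))\,\widetilde g_{n,m}(r(x))\quad(x\ne x_0),\qquad G_{n,m}(x_0)=0,
\]
where $\phi:[0,\infty)\to[0,1]$ is a fixed continuous bump with $\phi(0)=0$, $\phi(1)=1$, and compact support. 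Each $G_{n,m}$ is continuous on $X$, extends $g_{n,m}$ on $K_n$, and vanishes outside a thin collar around the cone over $K_n$ from $x_0$. The extension $\bar f$ is assembled as a pointwise limit of a diagonal subsequence $(G_{n,m(n)})_n$, chosen so that $\bar f|_K=f$; $F_\sigma$-measurability of $\bar f$ is then automatic because it is a Baire-class-$1$ function into $Y$.

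The main obstacle, and the place where I expect to spend the most effort, is the verification of linear continuity of $\bar f$. On lines through $x_0$ this is straightforward because $r$ is constant on each ray and $\phi(0)=0$ handles continuity at $x_0$. On a line $\ell$ not through $x_0$, however, the image $r(\ell)$ is a continuous curve $\gamma\subset\partial U$ on which $f$ is only $F_\sigma$-measurable, so a naive radial formula gives a restriction $\bar f|_\ell$ which need not be continuous. Here the strict-convexity hypothesis becomes essential: $\ell\cap K$ has at most two points, so one only needs to secure continuity of $\bar f|_\ell$ at those (at most two) locations together with continuity on the complement. By calibrating the shrinking $\partial U$-supports of the $\widetilde g_{n,m}$ and the diagonal extraction $m(n)$ carefully, one aims to ensure that the contributions $\widetilde g_{n,m(n)}\circ r$ along $\gamma\setminus(\ell\cap K)$ die out in the limit, while preserving the prescribed values $f$ on $K$. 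Executing this simultaneous control over \emph{every} affine line of $X$, rather than just for a preferred family, is the delicate technical step on which the whole argument hinges.
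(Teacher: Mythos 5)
Your plan has two concrete gaps, either of which is fatal as written. First, the assembly step does not work: you propose to recover $f$ on $K$ as a pointwise limit of a diagonal subsequence $(G_{n,m(n)})_n$, where for each fixed $n$ the functions $g_{n,m}$ converge to $f{\restriction}K_n$ only \emph{pointwise} as $m\to\infty$. Pointwise convergence does not diagonalize: there is in general no choice of a single $m(n)$ for each $n$ making $g_{n,m(n)}(x)\to f(x)$ simultaneously for all $x\in K_n$, unless the convergence is uniform --- which it is not for a general Baire-class-one (equivalently $F_\sigma$-measurable) $f$. The paper avoids this by a different decomposition: using the reduction theorem it writes $f=\sum_{n}f_n$ as a \emph{uniformly} convergent series of $\bar\sigma$-continuous summands with $\|f_n\|\le 2^{-n}$ (Lemma~\ref{lemFsigmaMeasurableFunctionAsSumOfSigmaContinuous}), extends each summand separately, and sums; uniform convergence then preserves both linear continuity and $F_\sigma$-measurability.

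Second, the step you yourself flag as ``the delicate technical step on which the whole argument hinges'' --- linear continuity of the limit along lines not through $x_0$ --- is exactly the content of the theorem and is not carried out. The observation that a line meets $\partial U$ in at most two points is far too weak: continuity of $\bar f{\restriction}\ell$ at a point $x\in\ell\cap K$ requires controlling $\bar f$ at nearby points of $\ell$ lying off $\partial U$, whose values your radial formula ties to $\widetilde g_{n,m(n)}$ at points of $\partial U$ near $x$ but \emph{outside} $K$, where nothing is prescribed. The paper's substitute for your ``calibration'' is a quantitative geometric lemma (Lemma~\ref{lemMIsLnbhd}): for each compact $K_n\subset\partial U$ there is a \emph{closed $\ell$-neighborhood} $L_n$ of $K_n$ with $L_n\cap\partial U=K_n$, built from strict convexity via the estimate $\epsilon(x)=\min\{\delta(x),\delta(x)^2\}$; one then extends $f{\restriction}K_n$ continuously over $L_n$ by Dugundji's theorem, and linear continuity at $x\in K_n$ is automatic because every line through $x$ enters $L_n$ for small parameters. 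Without an analogue of this lemma your plan has no mechanism for proving linear continuity. (A further, more minor, obstruction: your reduction to a bounded $U$ requires a bounded strictly convex open set containing a given compact set, which need not exist in a general normed space, let alone a topological vector space.)
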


Theorem~\ref{t:1.6} will be proved in Section~\ref{s:pf:t:1.6}. It has the following ``finite-dimensional'' corollary.

\begin{cor}\label{corExtencionOfFsigmaMaesurableFunctionInRn}
  Let $U$ be a strictly convex open set  in a finite-dimensional topological vector space $X$ and  $Y$ be a  Banach space.
  Every $F_\sigma$-measurable function $f:\partial U\to Y$ can be extended to a linearly continuous function $\bar f:X\to Y$.
\end{cor}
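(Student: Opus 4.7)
The plan is to deduce Corollary~\ref{corExtencionOfFsigmaMaesurableFunctionInRn} directly from Theorem~\ref{t:1.6} after verifying its two hypotheses: that the boundary $\partial U$ is $\sigma$-compact and that it is functionally closed in $X$. Both facts are immediate consequences of finite-dimensionality, so the argument is essentially a reduction.

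First I would invoke the standard fact that every Hausdorff finite-dimensional topological vector space is linearly homeomorphic to $\IR^m$ for some $m$. In particular $X$ is locally compact and separable metrizable. Since $\partial U$ is closed in $X$ (as the boundary of any set), it is a closed subset of a $\sigma$-compact space, hence itself $\sigma$-compact; moreover, since $X$ is metrizable, the open complement $X\setminus\partial U$ is automatically an $F_\sigma$-set in the normal space $X$ and therefore functionally open, so $\partial U$ is functionally closed.

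Next I would observe that, trivially, $\partial U\subset \partial U$ where $U$ is the given strictly convex open subset of $X$. Thus the set $K:=\partial U$ satisfies every hypothesis of Theorem~\ref{t:1.6}. Applying that theorem to the given $F_\sigma$-measurable function $f:\partial U\to Y$ yields an $F_\sigma$-measurable linearly continuous extension $\bar f:X\to Y$; discarding the (unused) $F_\sigma$-measurability gives exactly the conclusion of the corollary.

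There is no real obstacle here, since all the work is absorbed by Theorem~\ref{t:1.6}; the only thing to be careful about is the verification that in the metrizable finite-dimensional setting ``closed'' automatically upgrades to ``functionally closed'' and ``$\sigma$-compact'', which is why the general hypothesis of Theorem~\ref{t:1.6} on $K$ collapses to ``$K=\partial U$'' in the finite-dimensional case.
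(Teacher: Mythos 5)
Your reduction is correct and coincides with the paper's own treatment: the corollary is stated there as an immediate consequence of Theorem~\ref{t:1.6}, the point being precisely that for $X\cong\IR^m$ the closed set $\partial U$ is automatically $\sigma$-compact and (by metrizability) functionally closed, so $K=\partial U$ satisfies all hypotheses of that theorem.
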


\section{Proof of Theorem~\ref{t1}}\label{s:pf:t1}

Given a linearly continuous function $f:X\to\IR$ on a finite-dimensional topological vector space $X$, we need to prove that $f$ is of the first Baire class.

Since $\IR$ is homeomorphic to $(0;1)$, we may assume  that $f(X)\subset (0;1)$. Also we can identify the finite-dimensional  topological vector space $X$ with the Euclidean space $\IR^m$ for some $m\ge 0$. Being linearly continuous, the function $f:\IR^m\to (0;1)$ is separately continuous and by \cite{Rudin}, $f$ is of the $(m-1)$-th Baire class and thus is Borel-measurable.

  Let
$\mu$ be the Lebegue measure on the Euclidean space $X=\IR^m$ and $B=\{x\in X:\|x\|\le 1\}$ be the closed unit ball in $X$. For every $n\in\IN$, consider the function
$$f_n:X\to(0;1),\;\;f_n:x\mapsto\tfrac{1}{\mu(B)}\int\limits_{B}f\big(x+{\tfrac{u}{n}}\big)d\mu(u),$$
which is continuous by Lemma 473(b) in \cite{Fremlin}.  The linear continuity of $f$ ensures that $\lim_{n\to\infty}f(x+\frac{u}n)=f(x)$ for any $x\in X$ and $u\in B$.
By  Lebesgue's dominated convergence theorem, $f_n(x)\to f(x)$ for every $x\in X$, which means that $f$ is of the first Baire class.

\section{Proof of Theorem~\ref{t3}}\label{s:BP}

In this section we prove Theorem~\ref{t3}. The implications $(1)\Ra(2)\Ra(3)\Ra(1)$ of this theorem follow from Lemmas~\ref{l:cqc=>qc}, \ref{l:qc=>BP}, \ref{l:BP-cqc}, respectively.

\begin{lem}\label{l:cn} For any open set $U$ in a topological vector space $X$ and any point $x\in U$ there exists an open $x$-conical neighborhood $V\subset U$ of $x$.
\end{lem}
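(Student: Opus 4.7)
The plan is to exploit the existence of balanced (equivalently, circled) neighborhoods of the origin in any topological vector space. Recall that a set $W\subset X$ is \emph{balanced} if $tW\subset W$ for every scalar $t$ with $|t|\le 1$; in particular any balanced set is star-shaped at $0$, meaning $tW\subset W$ for all $t\in[0;1]$. The key observation is that translates of balanced open neighborhoods of $0$ yield open sets that are conical at the point of translation.

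First I would use continuity of the addition map $X\times X\to X$ at $(x,0)$, together with the standard fact that every topological vector space admits a neighborhood base at the origin consisting of balanced open sets, to choose a balanced open neighborhood $W$ of $0$ in $X$ with $x+W\subset U$. I then propose to set
\[
V:=x+W.
\]
Since translation is a homeomorphism of $X$, the set $V$ is open, and clearly $x\in V\subset U$, so $V$ is an open neighborhood of $x$ contained in $U$.

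It remains to verify that $V$ is $x$-conical. Take any $u\in V$ and write $u=x+w$ for some $w\in W$. For each $t\in(0;1)$ we compute
\[
(1-t)x+tu=x+tw,
\]
and because $W$ is balanced with $|t|\le 1$, we have $tw\in W$, so $x+tw\in V$. Hence the open segment $(x;u)$ lies in $V$, proving that $V$ is conical at $x$.

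I do not foresee any real obstacle: the only non-trivial ingredient is the existence of balanced neighborhoods of $0$, which is a standard consequence of the continuity of scalar multiplication and holds in every topological vector space (in particular without any local convexity or metrizability assumption, both of which are avoided in the paper's general setting).
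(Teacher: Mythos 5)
Your proof is correct, but it takes a genuinely different route from the paper's. The paper applies the tube lemma: using continuity of $\gamma:(z,t)\mapsto tz+(1-t)x$ on $X\times[0;1]$ and compactness of $[0;1]$, it finds a neighborhood $W$ of $x$ with $\gamma\big(W\times[0;1]\big)\subset U$ and takes $V=\bigcup_{0<t\le 1}\big((1-t)x+tW\big)$, i.e.\ the open cone with vertex $x$ over an arbitrary small neighborhood of $x$. You instead invoke the standard fact that $0$ has a base of balanced open neighborhoods and take $V=x+W$ for a balanced $W$ with $x+W\subset U$; balancedness makes the translate star-shaped at $x$, hence $x$-conical, with no cone construction needed. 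Both arguments use only the continuity of the vector-space operations and are valid in any (Hausdorff) topological vector space; yours is arguably shorter and produces a set that even contains the closed segment $[x;u]$ for each $u\in V$, while the paper's is a more generic compactness argument that does not need the balanced-base theorem as an input. One small point worth making explicit in a write-up: the balanced open neighborhood $W$ contains $0$ (so $x\in V$ and $V\ne\emptyset$, as the definition of $x$-conical requires), which follows since a balanced set containing any point contains $0=0\cdot w$.
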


\begin{proof} By the continuity of the map $\gamma:X\times[0,1]\to X$, $\gamma:(z,t)\mapsto tz+(1-t)x$, (with $\gamma\big(\{x\}\times[0;1]\big)=\{x\}$) and the compactness of the closed interval $[0;1]$, there exists an open neighborhood $W\subset X$ of $x$ such that $\gamma\big(W\times[0;1]\big)\subset U$. It follows that $$V:=\gamma\big(W\times (0;1]\big)=\bigcup_{0<t\le 1}\big((1-t)x+tW\big)$$ is a required open $x$-conical neighborhood  in $U$.
\end{proof}

 \begin{lem}\label{l:cqc=>qc} Every conically quasi-continuous function $f:X\to Y$ from a topological vector space $X$ to a topological space $Y$ is quasi-continuous.
\end{lem}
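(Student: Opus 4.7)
The plan is to deduce quasi-continuity directly from conical quasi-continuity by using Lemma~\ref{l:cn} to replace an arbitrary neighborhood of $x$ with an $x$-conical open one.

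Fix a point $x\in X$, an open neighborhood $V\subset X$ of $x$, and an open neighborhood $W\subset Y$ of $f(x)$. First I would apply Lemma~\ref{l:cn} to the open set $V$ to obtain an open $x$-conical set $V_0\subset V$ with $x\in V_0$. Then, invoking the conical quasi-continuity of $f$ at $x$ with this $x$-conical open set $V_0$ and the neighborhood $W$ of $f(x)$, I would extract an $x$-conical open set $U\subset V_0$ such that $f(U)\subset W$. Since by the definition of an $x$-conical set $U$ is automatically nonempty, and $U\subset V_0\subset V$ is open, this exhibits $U$ as the nonempty open subset of $V$ witnessing quasi-continuity of $f$ at $x$.

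There is essentially no obstacle here: the whole argument is a one-line reduction, and the only substantive input is Lemma~\ref{l:cn}, which guarantees that every open neighborhood of a point $x$ in a topological vector space contains an open $x$-conical neighborhood of $x$. Once that tool is in hand, conical quasi-continuity applied inside such a neighborhood immediately yields ordinary quasi-continuity.
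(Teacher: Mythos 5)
Your proof is correct and follows essentially the same route as the paper's: shrink the given neighborhood to an open $x$-conical one via Lemma~\ref{l:cn}, apply conical quasi-continuity there, and observe that the resulting $x$-conical open set is nonempty by definition.
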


\begin{proof} Given a point $x\in X$ and two open sets ${O}\subset X$ and ${W}\subset Y$ with $x\in {O}$ and $f(x)\in {W}$, we need to find a non-empty open set $U\subset {O}$ such that $f(U)\subset {W}$. By Lemma~\ref{l:cn}, the neighborhood ${O}$ of $x$ contains an open $x$-conical neighborhood $V$ of $x$. By the conical quasi-continuity of $f$, there exists a open $x$-conical set $U\subseteq V\subseteq {O}$ such that $f(U)\subset {W}$. By definition of an $x$-conical set, $U$ is not empty, witnessing that $f$ is quasi-continuous.
\end{proof}

\begin{lem}\label{l:qc=>BP} Every quasi-continuous function $f:X\to Y$ between topological spaces is BP-measurable.
\end{lem}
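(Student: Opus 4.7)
The plan is to unpack the definitions and reduce BP-measurability of $f$ to the elementary observation that a quasi-open set differs from an open set by a nowhere dense set. Let $U\subset Y$ be functionally open; in particular $U$ is open in $Y$, so I only need to show that $A:=f^{-1}(U)$ has the Baire property in $X$.

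By the characterization of quasi-continuity recalled in the paper, $A$ is quasi-open: the interior $A^\circ$ is dense in $A$, i.e.\ $A\subseteq \overline{A^\circ}$. The key step is to look at the symmetric difference $A\triangle A^\circ = A\setminus A^\circ$. Since $A\subseteq \overline{A^\circ}$ and $A^\circ\subseteq A$, we have
\[
A\setminus A^\circ \;\subseteq\; \overline{A^\circ}\setminus A^\circ \;=\; \partial A^\circ,
\]
which is closed with empty interior (the boundary of an open set is always nowhere dense). Hence $A\triangle A^\circ$ is nowhere dense in $X$, and in particular meager.

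Therefore $A$ is the symmetric difference of the open set $A^\circ$ with a meager set, which is exactly the definition of having the Baire property. This shows $f^{-1}(U)$ has the Baire property for every functionally open $U\subset Y$, so $f$ is BP-measurable. I do not anticipate any obstacle: the whole argument is a one-line deduction from the fact that the boundary of an open set is nowhere dense, once the quasi-open form of quasi-continuity is invoked.
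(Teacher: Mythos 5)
Your proof is correct and follows essentially the same route as the paper: both reduce to the observation that the quasi-open set $f^{-1}(U)$ differs from its interior by a nowhere dense set, hence has the Baire property. Your explicit identification of $A\setminus A^\circ$ with a subset of the boundary $\overline{A^\circ}\setminus A^\circ$ merely spells out the step the paper states without elaboration.
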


\begin{proof} To prove that $f$ is BP-measurable, fix any open set $U\subset Y$. The quasi-continuity of $f$ ensures that the interior $f^{-1}(U)^\circ$ of the preimage $f^{-1}(U)$ is dense in $f^{-1}(U)$. Then the set $f^{-1}(U)\setminus f^{-1}(U)^\circ$ is nowhere dense and hence $f^{-1}(U)$ has the Baire property in $X$, witnessing that the function $f$ is BP-measurable.
\end{proof}

\begin{lem}\label{l:BP=>G}
Let $f:X\to Y$ be a BP-measurable linearly continuous function from a Baire topological vector space $X$ to a topological space $Y$. For any point $x\in X$, functionally open neighborhood ${W}\subseteq Y$ of $f(x)$ and $x$-conical open set $V\subseteq X$ there exist  an $x$-conical open set $U\subseteq V$ and a dense $G_\delta$-subset $G$ of $U$ such that $f(G)\subseteq {W}$.
\end{lem}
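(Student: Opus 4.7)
My plan is to combine BP-measurability of $f$ with linear continuity along rays from $x$ to manufacture an $x$-conical open subset of $V$ inside which $f^{-1}(W)$ is comeager, and then extract $G$. Translate so that $x=0$, and use the functional openness $W=g^{-1}(V')$ (continuous $g:Y\to\IR$, open $V'\subseteq\IR$) to replace $f$ by the BP-measurable linearly continuous function $h:=g\circ f$, so $A:=h^{-1}(V')=f^{-1}(W)$. Linear continuity along each direction $v$ then makes $\epsilon_v:=\sup\{s>0:[0,s)\cdot v\subseteq A\}$ strictly positive.

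The crux is to show that $V_c:=\{v\in V:\epsilon_v>c\}$ has the Baire property in $V$ for every $c>0$. Let $(a,b)$ be the connected component of $V'$ containing $h(0)$. Continuity of $t\mapsto h(tv)$ makes $h([0,c]\cdot v)$ a compact interval, so $\epsilon_v>c$ is equivalent to $h([0,c]\cdot v)\subseteq(a,b)$, which by compactness rewrites as ``$\exists\delta\in\mathbb{Q}^+$ with $h(tv)\in[a+\delta,b-\delta]$ for all $t\in[0,c]\cap\mathbb{Q}$''. Therefore
\[
V_c \;=\; V\cap\bigcup_{\delta\in\mathbb{Q}^+}\bigcap_{t\in(0,c]\cap\mathbb{Q}}\tfrac{1}{t}\cdot h^{-1}\!\bigl([a+\delta,b-\delta]\bigr),
\]
a countable Boolean combination of BP sets, since scalar multiplication by $1/t$ is a homeomorphism of $X$ and $h$ is BP-measurable.

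Since $V=\bigcup_n V_{1/n}$ and $V$ is Baire (open in the Baire space $X$), some $V_{n_0}$ is non-meager in $V$, so one can choose a non-empty open $U_V\subseteq V$ in which $V_{n_0}$ is comeager, with $M:=U_V\setminus V_{n_0}$ meager. Setting $\Phi(v,t):=tv$, which is continuous and open, the set $U:=\Phi(U_V\times(0,1/n_0))$ is open, contained in $V$ by the $0$-conicality of $V$, and itself $0$-conical, since $s\cdot(tv)=(st)v\in U$ whenever $s\in(0,1)$ and $t\in(0,1/n_0)$.

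For $v\in U_V\setminus M$ we have $\epsilon_v>1/n_0$, hence $tv\in A$ for every $t\in(0,1/n_0)$; thus $\Phi^{-1}(U\setminus A)\subseteq M\times(0,1/n_0)$ is meager in the Baire space $U_V\times(0,1/n_0)$ (a product of a Baire space with a Polish one). Since $\Phi:U_V\times(0,1/n_0)\to U$ is an open continuous surjection, the standard principle that such a map cannot send a meager preimage to a non-meager BP image forces $U\setminus A$ to be meager in the Baire open set $U$; removing from $U$ a meager $F_\sigma$-envelope of $U\setminus A$ yields the desired dense $G_\delta$ subset $G\subseteq A$, so $f(G)\subseteq W$. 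I expect the main difficulty to be the Baire-property argument for $V_c$: the passage from the uncountable condition ``$h([0,c]\cdot v)\subseteq V'$'' to a countable Boolean expression relies delicately on functional openness of $W$, connectedness of the compact image interval, and closedness of $[a+\delta,b-\delta]$.
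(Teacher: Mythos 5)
Your proof is correct, and its skeleton coincides with the paper's: stratify $V$ by the depth to which the ray from $x$ through $v$ stays inside $f^{-1}(W)$ (your $V_{1/n}$ is exactly the paper's $V_n=\{v\in V:x+[0;\tfrac1n]\cdot v\subset f^{-1}(W)\}$ after translating $x$ to $0$), find a non-meager stratum by Baire category, and sweep it by the scalar action to produce the conical open set $U$. The genuine divergence is in how the category argument is executed. You spend your main effort proving that the strata $V_c$ themselves have the Baire property, via the reduction $h=g\circ f$ and the countable Boolean expression over rational $t$ and $\delta$; this is correct (modulo reinterpreting $[a+\delta,b-\delta]$ when the component $(a,b)$ is unbounded, and noting that omitting $t=0$ is harmless by continuity along the ray), and it uses the functional openness of $W$ in an essential way. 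The paper never establishes BP for the $V_n$: it only uses that some $V_n$ is non-meager, localizes via the Banach category theorem to an open $U'$ in which $U'\cap V_n$ is a dense Baire subspace, and invokes the Baire property exactly once, for $f^{-1}(W)$ itself, at the very end to upgrade ``contains a dense Baire subspace of $U$'' to ``contains a dense $G_\delta$ in $U$''. Your endgame instead pushes meagerness of $U\setminus A$ back through the open surjection $\Phi$, which additionally requires Oxtoby's theorem that $U_V\times(0,1/n_0)$ is Baire and the fact that an open continuous map with Baire domain reflects meagerness for sets with BP -- both standard, and you correctly include the BP hypothesis where it is needed. So your route is heavier (BP of the strata, product Baireness) but makes the role of each hypothesis explicit, while the paper's is more economical, needing only one application of the Baire property.
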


\begin{proof}  The BP-measurability of $f$ ensures that the preimage $f^{-1}({W})$ has the Baire property in $X$.

For every $n\in\IN$, consider the $x$-conical subset $V_n=\big\{v\in V:x+[0;\tfrac1n]\cdot v\subset f^{-1}({W})\big\}$ of $V$ and observe that $V=\bigcup_{n\in\IN}V_n$ by the linear continuity of $f$. Since the open set $V$ is not meager in the Baire space $X$, for some $n\in\IN$ the set $V_n$ is not meager in $X$. Consequently, there exists a non-empty open set $U'\subset V$ such that $U'\cap V_n$ is a dense Baire subspace of $U'$. Consider the open $x$-conical subset $U=\big\{x+tu:u\in U',\;0<t\le\frac1n\big\}$ of  $V$ and observe that $U\cap (x+\frac1nV_n)$ is a dense Baire subspace of $U$. This follows from the observation that for any point $u\in U$ there exists  $t\in(0;\frac1n]$ and $u'\in U'$ such that $u=x+tu'$ and then $x+tU'$ is an open neighborhood of $u$ in $U$ such that $(x+tU')\cap (x+\frac1nV_n)=(x+tU')\cap(x+[0,\tfrac1n]V_n)\supset x+t(U'\cap V_n)$ is a dense Baire subspace of $x+tU'$.

Now observe that the intersection $U\cap  f^{-1}({W})$ has the Baire property in $X$ and contains the dense Baire subspace $U\cap (x+\tfrac1nV_n)$ of $U$, which implies that $U\cap f^{-1}({W})$ contains a dense $G_\delta$-subset $G$ of $U$.
\end{proof}

\begin{lem}\label{l:BP-cqc} Every BP-measurable linearly continuous function $f:X\to Y$ from a Baire topological vector space $X$ to a Tychonoff space $Y$ is conically quasi-continuous.
\end{lem}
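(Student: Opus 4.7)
The plan is to apply Lemma~\ref{l:BP=>G} twice and close the argument by a Baire category collision of two dense $G_\delta$ sets. Fix $x\in X$, an open neighborhood $W\subseteq Y$ of $f(x)$ and an $x$-conical open set $V\subseteq X$; I must produce an $x$-conical open $U\subseteq V$ with $f(U)\subseteq W$.

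Because $Y$ is Tychonoff, I will first pick a continuous $\phi:Y\to[0,1]$ with $\phi(f(x))=0$ and $\phi\equiv 1$ on $Y\setminus W$, and set $W_0:=\phi^{-1}([0,\tfrac13))$ and $F:=\phi^{-1}([0,\tfrac12])$. Then $W_0$ is a functionally open neighborhood of $f(x)$, $F$ is functionally closed with $W_0\subseteq F\subseteq W$, and crucially $Y\setminus F=\phi^{-1}((\tfrac12,1])$ is also functionally open. Applying Lemma~\ref{l:BP=>G} at $x$ to the pair $(W_0,V)$ yields an $x$-conical open $U\subseteq V$ and a dense $G_\delta$-subset $G\subseteq U$ with $f(G)\subseteq W_0\subseteq F$. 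I claim that already $f(U)\subseteq F\subseteq W$, so this very $U$ is the one we want.

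For the claim, I argue by contradiction: suppose some $u\in U$ satisfies $f(u)\in Y\setminus F$. Lemma~\ref{l:cn} then supplies a $u$-conical open neighborhood $V'\subseteq U$ of $u$, and a second application of Lemma~\ref{l:BP=>G}, now at the point $u$ with the functionally open target $Y\setminus F$ and ambient $V'$, produces a $u$-conical open $U'\subseteq V'\subseteq U$ together with a dense $G_\delta$-subset $G'\subseteq U'$ such that $f(G')\subseteq Y\setminus F$. As an open subset of the Baire space $U$, the set $U'$ is itself Baire, so the dense $G_\delta$-subsets $G\cap U'$ and $G'$ of $U'$ must meet. But $G\cap G'\subseteq f^{-1}(F)\cap f^{-1}(Y\setminus F)=\emptyset$, giving the required contradiction.

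The single genuine obstacle is that Lemma~\ref{l:BP=>G} by itself controls $f$ only on a dense $G_\delta$, not on a whole conical open set. The trick that rescues us is the Tychonoff separation: the buffer $F$ between $W_0$ and $W$ is functionally closed precisely so that its complement $Y\setminus F$ is again functionally open, which is exactly the hypothesis needed to reapply Lemma~\ref{l:BP=>G} at a hypothetical bad point $u$. The Baire category theorem then converts this ``two-sided'' use of the lemma into an impossible collision of dense $G_\delta$ sets inside the Baire space $U'$, which forces $f(U)\subseteq F\subseteq W$ and hence establishes conical quasi-continuity.
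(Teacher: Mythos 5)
Your proposal is correct and follows essentially the same route as the paper: one application of Lemma~\ref{l:BP=>G} at $x$, a second application at a hypothetical bad point $u$ (after shrinking via Lemma~\ref{l:cn}), and a contradiction from two dense $G_\delta$-sets meeting in a Baire open set. The only cosmetic difference is that you manufacture the functionally open/closed buffer $W_0\subseteq F\subseteq W$ from a single Urysohn function, whereas the paper takes a functionally open $W$ with $\overline{W}\subset O$ and then separately chooses a functionally open neighborhood of $f(u)$ inside $Y\setminus\overline{W}$; both serve the same purpose.
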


\begin{proof} Given a point $x\in X$, an open $x$-conical set $V\subset X$ and a neighborhood ${O}\subset Y$ of $f(x)$, we need to find an $x$-conical open set $U\subset V$ such that $f(U)\subset {O}$. Since the space $Y$ is Tychonoff, there exists a functionally open neighborhood $W$ of $f(x)$ such that $\overline{W}\subset {O}$.

By Lemma~\ref{l:BP=>G}, there exists an open $x$-conical set $U\subset V$ and a dense $G_\delta$-set $G$ in $U$ such that $f(G)\subset W$. We claim that $f(U)\subset\overline{W}\subset {O}$. To derive a contradiction, assume that $f(u)\notin\overline{W}$ for some $u\in U$. Since $Y$ is Tychonoff, the point $f(u)$ has a  functionally open neighborhood $W_u\subset X\setminus\overline{W}$. By Lemma~\ref{l:cn}, there exists an open $u$-conical neighborhood $V_u\subset U$ of $u$. By Lemma~\ref{l:BP=>G}, there exists an open $u$-conical set $U_u\subset V_u$ containing a dense $G_\delta$-subset $G_u$ such that $f(G_u)\subset W_u$. Then $G_u$ and $U_u\cap G$ are two dense $G_\delta$-subsets of the space $U_u$. Since $U_u$ is Baire, the intersection $G_u\cap(U_u\cap G)=G_u\cap G$ is not empty. On the other hand,
$$f(G_u\cap U_u\cap G)=f(G_u\cap G)\subset f(G_u)\cap f(G)\subset W_u\cap W\subset (Y\setminus \overline{W})\cap W=\emptyset,$$
and this is a desired contradiction completing the proof.
\end{proof}

\section{Proof of Theorem~\ref{t:cqc=>F}}\label{s:tF}

Given a conically quasi-continuous function $f:X\to Y$ from a second-countable topological vector space $X$ to a topological space $Y$, we need to prove that $f$ is $F_\sigma$-measurable. To derive a contradiction, assume that the function $f$ is not $F_\sigma$-measurable. Then there exists a functionally open subset $G\subset Y$ such that $A=f^{-1}(G)$ is not of type $F_\sigma$ in  $X$.

We say that a subset $B\subset A$ can be separated from $X\setminus A$ by an $F_\sigma$-set if there exists an $F_\sigma$-set $F\subset X$ such that $B\subset F\subset A$. It follows that $A$ cannot be separated from $X\setminus A$ by an  $F_\sigma$-set. Moreover, for any countable cover $\mathcal C$ of $A$ there exists a set $C\in\mathcal C$ that cannot be separated from $X\setminus A$ by an $F_\sigma$-set.

Since $G$ is functionally open in $Y$, there exists a continuous function $\varphi:Y\to[0;1]$ such that $G=\varphi^{-1}\big((0;1]\big)$. For every $m\in\IN$ consider the open set $G_m=\varphi^{-1}\big((\tfrac1m;1]\big)$ and observe that $G=\bigcup_{m\in\IN}G_m=\bigcup_{m\in\IN}\overline{G_m}$.

Since $A=\bigcup_{m\in\IN}f^{-1}(G_m)$ cannot be separated from $X\setminus A$ by an $F_\sigma$-set, for some $m\in\IN$ the set $A_m=f^{-1}(G_m)$ cannot be separated from $X\setminus A$ by an $F_\sigma$-set.

Fix a countable base $\{B_n\}_{n\in\w}$ of the topology of the second-countable space $X$, consisting of non-empty open sets. For every $n\in\IN$ consider the open $0$-conical set $\check B_n=(0;1]\cdot B_n$. Observe that every open $0$-conical subset of $X$ contains some set $\check B_n$.
This fact and the conical continuity of $f$ imply that for every point $x\in A_m$ there exists a number $n_x\in\IN$ such that $f(x+\check B_{n_x})\subset G_m$.
For every $n\in\IN$ consider the subset $A_{m,n}:=\{x\in A_m:n_x=n\}$.
Since the set $A_m=\bigcup_{n\in\IN}A_{m,n}$ cannot be separated from $X\setminus A$ by an $F_\sigma$-set, for some $n\in\IN$ the set $A_{m,n}$ cannot be separated from $X\setminus A$ by an $F_\sigma$-set. Then the closure $\overline{A_{m,n}}$ of $A_{m,n}$ in $X$
has a common point $y$ with $X\setminus A$. It follows that $f(y)\notin G$ and hence $f(y)\notin\overline{G_m}$. By the conical quasi-continuity of $f$, there exists $k\in\IN$ such that $y+\check B_k\subset y+\check B_m$ and $f(y+\check B_k)\subset Y\setminus\overline{G_m}$. It follows that $\check B_k\subset\check B_n$ and hence $y+\check B_k-\check B_k\subset y+\check B_k-\check B_n$ is a neighborhood of $y$ in $X$. Since $y\in\overline{A_{m,n}}$, there exists a point $z\in (y+\check B_k-\check B_n)\cap A_{m,n}$. For this point $z$ the sets $z+\check B_n$ and $y+\check B_k$ have non-empty intersection. On the other hand,
$$f\big((z+\check B_n)\cap (y+\check B_k)\big)\subset f(z+\check B_n)\cap f(y+\check B_k)\subset G_m\cap\big(Y\setminus\overline{G_m}\,\big)=\emptyset,$$
which is a contradiction that completes the proof of Theorem~\ref{t:cqc=>F}.

\section{Proof of Theorem~\ref{t2}}\label{s:t2}

Let $f:X\to Y$ be a BP-measurable linearly continuous function from a Baire cosmic vector space $X$ to a Tychonoff space $Y$. By \cite[Theorem 1]{BH}, the space $X$ is separable and metrizable (being a Baire cosmic topological group). By Lemma~\ref{l:BP-cqc}, the function $f$ is conically quasi-continuous and by Theorem~\ref{t:cqc=>F}, $f$ is $F_\sigma$-measurable. If the space $Y$ is separable, metrizable, connected and locally path-connected, then $f$ is of the first Baire class according to the Fosgerau Theorem 1 in \cite{Fosgerau}.

\section{Proof of Theorem~\ref{t:miser}}\label{s:miser}

The two statements of Theorem~\ref{t:miser} are proved in Lemmas~\ref{l:miser2} and \ref{l:miser4}.

\begin{lem}\label{l:miser1}  For any $F_\sigma$-measurable quasi-continuous linearly continuous function ${f:X\to Y}$ from a  topological vector space $X$ to a metrizable separable space $Y$, the set $D(f)$ of discontinuity points of $f$ is $\bar\sigma$-$\ell$-miserable in $X$.
\end{lem}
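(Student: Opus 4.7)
My plan is to realise $D(f)$ as a countable union of closed subsets of $X$, each of which is $\ell$-miserable. Fix a compatible metric $d$ on $Y$ and a countable base $\{W_k\}_{k\in\IN}$ of open sets in $Y$ with the property that every $y\in Y$ has, for every $\eta>0$, a base element $W_k\ni y$ of diameter less than $\eta$. For each $k$ write $W_k=\bigcup_l W'_{k,l}$ with $\overline{W'_{k,l}}\subset W_k$ (for instance, $W'_{k,l}:=\{y\in W_k:d(y,Y\setminus W_k)>1/l\}$). The $F_\sigma$-measurability of $f$ then provides decompositions $f^{-1}(W'_{k,l})=\bigcup_n F_{k,l,n}$ into closed subsets of $X$. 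Set $T_k:=f^{-1}(Y\setminus\overline{W_k})$ and define the closed sets $A_{k,l,n}:=F_{k,l,n}\cap\overline{T_k}$.

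I would first verify $D(f)=\bigcup_{k,l,n}A_{k,l,n}$. Every $a\in A_{k,l,n}$ satisfies $f(a)\in W'_{k,l}\subset W_k$ while every neighborhood of $a$ meets $T_k$; hence $\w_f(a)\ge\dist(f(a),Y\setminus\overline{W_k})>0$ and $a\in D(f)$. Conversely, given $a\in D(f)$, choose a base element $W_k\ni f(a)$ of diameter less than $\w_f(a)/4$. In each open neighborhood $U\ni a$, the inequality $\mathop{\mathrm{diam}} f(U)\ge\w_f(a)$ combined with a triangle-inequality calculation produces $b\in U$ with $d(f(a),f(b))>\mathop{\mathrm{diam}}\overline{W_k}$, forcing $f(b)\notin\overline{W_k}$ and thus $b\in T_k$; consequently $a\in\overline{T_k}$, and choosing $l$ with $f(a)\in W'_{k,l}$ and $n$ with $a\in F_{k,l,n}$ places $a$ in $A_{k,l,n}$.

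The central step is to prove each $A_{k,l,n}$ is $\ell$-miserable, with witnessing closed $\ell$-neighborhood $L:=\overline{f^{-1}(W'_{k,l})}$. That $L$ is an $\ell$-neighborhood of $A_{k,l,n}$ follows from linear continuity: for $a\in A_{k,l,n}\subset f^{-1}(W'_{k,l})$ and any $v\in X$, continuity of $t\mapsto f(a+tv)$ at $t=0$ together with openness of $W'_{k,l}$ gives $a+[0,\e)v\subset f^{-1}(W'_{k,l})\subset L$ for some $\e>0$. The principal obstacle is the second condition $A_{k,l,n}\cap L^\circ=\emptyset$. I plan to argue by contradiction: if $a\in A_{k,l,n}\cap L^\circ$, then $U:=L^\circ$ is an open neighborhood of $a$ contained in $\overline{f^{-1}(W'_{k,l})}$. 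Since $a\in\overline{T_k}$ and quasi-continuity of $f$ makes $T_k^\circ$ dense in $T_k$, the set $V:=U\cap T_k^\circ$ is a nonempty open subset of $X$. From $T_k\cap f^{-1}(W_k)=\emptyset$ and $f^{-1}(W'_{k,l})\subset f^{-1}(W_k)$ one deduces $V\subset X\setminus f^{-1}(W'_{k,l})$; together with $V\subset U\subset\overline{f^{-1}(W'_{k,l})}$ this exhibits $V$ as a nonempty open subset of $\overline{f^{-1}(W'_{k,l})}$ disjoint from $f^{-1}(W'_{k,l})$. A second invocation of quasi-continuity---which yields $(f^{-1}(W'_{k,l}))^\circ$ dense in $\overline{f^{-1}(W'_{k,l})}$---then forces $V\cap(f^{-1}(W'_{k,l}))^\circ\neq\emptyset$, contradicting the previous sentence.
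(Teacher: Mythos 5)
Your argument is correct, and it follows the paper's overall strategy---decompose $D(f)$ into countably many closed pieces indexed by a countable base of $Y$, witness the $\ell$-miserability of each piece by the closure of the preimage of a basic open set, and let linear continuity supply the $\ell$-neighborhood property---but your decomposition and the point where quasi-continuity is used are genuinely different. The paper sets $E_n=f^{-1}(V_n)\setminus\overline{f^{-1}(V_n)}^\circ$, so the miserability condition $E_n\subseteq X\setminus L_n^\circ=\overline{X\setminus L_n}$ holds by construction, and all the work (including the only use of quasi-continuity) goes into proving the inclusion $D(f)\subseteq\bigcup_nE_n$ via sets $A$ with $x\in\overline{A}$ and $f(x)\notin\overline{f(A)}$. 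You instead shrink the base elements to $W'_{k,l}$ with $\overline{W'_{k,l}}\subset W_k$ and intersect with $\overline{T_k}$; this makes the identification $D(f)=\bigcup_{k,l,n}A_{k,l,n}$ a purely metric oscillation computation requiring no quasi-continuity, and pushes quasi-continuity into the verification that $A_{k,l,n}\cap L^\circ=\emptyset$. Both architectures work; yours pays for the cleaner discontinuity computation with the extra index $l$. One small simplification: your final invocation of quasi-continuity is superfluous, since once $V$ is a nonempty open set contained in $\overline{f^{-1}(W'_{k,l})}$ it automatically meets $f^{-1}(W'_{k,l})$, which already contradicts $V\subseteq T_k$.
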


\begin{proof} Let $\{V_n\}_{n\in\IN}$ be a countable base of the topology of the separable metrizable space $Y$. For every $n\in\IN$, consider the set
$E_n=f^{-1}(V_n)\setminus\overline{f^{-1}(V_n)}^\circ$.
Since  $f$ is $F_\sigma$-measurable, $E_n$ is $F_\sigma$-set in $X$. Then $E_n=\bigcup_{k\in\IN} E_{nk}$ for suitable closed sets $E_{nk}$ in $X$.

Let us prove that $D(f)\subseteq\bigcup_{n\in\IN}E_n$. Fix a point $x\in D(f)$. Then there exists a set $A\subseteq X$ such that $x\in \overline{A}$ and $f(x)\not\in \overline{f(A)}$. Pick $n\in\mathbb{N}$ such that $f(x)\in V_n$ and $\overline{V_n}\cap f(A)=\emptyset$.

Let us show that $x\in E_n$. Obviously, $x\in f^{-1}(V_n)$.
Assuming that $x\in\overline{f^{-1}(V_n)}^\circ$, we can find a point $a\in A\cap
\overline{f^{-1}(V_n)}^\circ$. Since $f(a)\notin\overline{V_n}$, we can use the quasi-continuity of $f$ and find a non-empty open set $U\subset \overline{f^{-1}(V_n)}^\circ$ such that $f(U)\cap\overline{V_n}=\emptyset$. On the other hand, since $U\subset \overline{f^{-1}(V_n)}$, there exists a point $u\in U\cap f^{-1}(V_n)$. Then $f(u)\in V_n\cap f(U)\subset V_n\cap (Y\setminus\overline{V_n})=\emptyset$,
which is a desired contradiction, completing the proof of the inclusion $D(f)\subset\bigcup_{n\in\IN}E_n$.

Now we prove $\bigcup_{n\in\IN}E_{n}\subseteq D(f)$. Fix $n\in\IN$ and  $x\in E_{n}$. Then
$x\in E_n=f^{-1}(V_n)\setminus \overline{f^{-1}(V_n)}^\circ$. Consider the set $A=X\setminus f^{-1}(V_n)$ and observe that $\overline{A}=\overline{X\setminus
f^{-1}(V_n)}=X\setminus  f^{-1}(V_n)^\circ\supset X\setminus\overline{f^{-1}(V_n)}^\circ\ni x$ and $\overline{f(A)}\subseteq Y\setminus V_n\not\ni f(x)$, which implies $x\in D(f)$.

It is remains to prove that the sets $E_{nk}$ are $\ell$-miserable. Let $L_n=\overline{f^{-1}(V_n)}$. It is clear that $L_n$ is a closed $\ell$-neighborhood of $E_{nk}$ (as $E_{nk}\subset E_n\subset f^{-1}(V_n)$). Furthermore, $E_{nk}\subseteq E_n\subseteq X\setminus L_n^\circ= \overline{X\setminus L_n}$.
\end{proof}

Theorems~\ref{t2}, \ref{t3} and Lemma~\ref{l:miser1} imply

\begin{lem}\label{l:miser2}  For any $BP$-measurable linearly continuous function $f:X\to Y$ from a Baire cosmic vector space $X$ to a metrizable separable space $Y$, the set $D(f)$ of discontinuity points of $f$ is $\bar\sigma$-$\ell$-miserable in $X$.
\end{lem}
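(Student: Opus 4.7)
The plan is to obtain Lemma~\ref{l:miser2} as a short packaging of the three results already established: Theorem~\ref{t2}, Theorem~\ref{t3}, and Lemma~\ref{l:miser1}. The target conclusion, that $D(f)$ is $\bar\sigma$-$\ell$-miserable, has already been proved in Lemma~\ref{l:miser1} under two hypotheses on $f$, namely $F_\sigma$-measurability and quasi-continuity. So the entire job is to verify that these two hypotheses follow from the assumptions here.

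First I would note that, since $X$ is a Baire cosmic vector space, the general theory quoted earlier in the paper (after the definition of cosmic vector spaces) gives that $X$ is separable and metrizable, and in particular $X$ is a Baire topological vector space. This makes Theorem~\ref{t3} applicable to $f$: from the assumption that $f$ is BP-measurable and linearly continuous, one concludes that $f$ is conically quasi-continuous and hence (by Lemma~\ref{l:cqc=>qc}, or directly from Theorem~\ref{t3}) also quasi-continuous. Next, Theorem~\ref{t2} applies verbatim, with $Y$ a separable metrizable space (hence certainly Tychonoff), to give that $f$ is $F_\sigma$-measurable.

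With $f$ now known to be simultaneously $F_\sigma$-measurable, linearly continuous, and quasi-continuous, and with the codomain $Y$ separable metrizable, the hypotheses of Lemma~\ref{l:miser1} are satisfied. Invoking that lemma directly yields that $D(f)$ is $\bar\sigma$-$\ell$-miserable in $X$, which is the desired conclusion.

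There is no real obstacle here: the substantive work lies in Theorems~\ref{t2} and \ref{t3} (which relate BP-measurability, conical quasi-continuity, quasi-continuity, and $F_\sigma$-measurability on Baire topological vector spaces) and in Lemma~\ref{l:miser1} (which extracts the $\bar\sigma$-$\ell$-miserable structure of $D(f)$ from the $F_\sigma$-measurable/quasi-continuous setting). The proof of Lemma~\ref{l:miser2} is thus purely a bookkeeping step that assembles these inputs; the only thing to be careful about is making sure the chain of implications actually matches hypotheses — in particular that ``Baire cosmic'' is enough to feed both Theorem~\ref{t2} (which needs cosmic) and Theorem~\ref{t3} (which needs Baire topological vector space), and that ``separable metrizable $Y$'' suffices for both Theorem~\ref{t2} (which needs Tychonoff) and Lemma~\ref{l:miser1} (which needs separable metrizable).
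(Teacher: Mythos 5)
Your proposal is correct and is exactly the paper's argument: the paper states Lemma~\ref{l:miser2} as an immediate consequence of Theorems~\ref{t2}, \ref{t3} and Lemma~\ref{l:miser1}, which is precisely the chain you assemble (Theorem~\ref{t3} gives quasi-continuity, Theorem~\ref{t2} gives $F_\sigma$-measurability, and Lemma~\ref{l:miser1} then yields the $\bar\sigma$-$\ell$-miserability of $D(f)$). Your hypothesis-matching checks are also the right ones and pose no issue.
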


\begin{lem}\label{lemLCFwithMiserableD(f)}
  Let $X$ be a metrizable topological vector space and $F$ be a closed $\ell$-miserable set in $X$. Then there exists a lower semicontinuous linearly continuous function  ${f:X\to [0;1]}$ such that $D(f)=F\subseteq f^{-1}(0)$.
\end{lem}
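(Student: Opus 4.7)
The plan is to construct $f$ using a witness $L$ of $\ell$-miserability together with a compatible metric on $X$. Fix such an $L$: a closed $\ell$-neighborhood of $F$ with $F\subseteq\overline{X\setminus L}$, and fix a compatible metric $\rho$ on the metrizable vector space $X$. I will define
\[
f(x)=\begin{cases}0, & x\in L,\\ g(x), & x\in X\setminus L,\end{cases}
\]
where $g\colon X\setminus L\to[0,1]$ is a continuous function chosen so that (i) $g(x)\to 0$ as $x\to p\in L\setminus F$ from within $X\setminus L$, and (ii) $\limsup_{x\to a}g(x)>0$ for every $a\in F$.

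Granting such $g$, the three required properties of $f$ follow by routine verification. For \emph{lower semicontinuity}, the set $\{f>\alpha\}\subseteq X\setminus L$ is open in the open set $X\setminus L$, hence in $X$, for each $\alpha\in[0,1)$, using continuity of $g$. For \emph{linear continuity}, at $a\in F$ the $\ell$-neighborhood property applied to both $v$ and $-v$ produces $\varepsilon>0$ with $a+(-\varepsilon,\varepsilon)v\subseteq L$, so $f\equiv 0$ on a neighborhood of $a$ along every line; at points of $X\setminus F$, $f$ is continuous as a function on $X$ (continuity on $X\setminus L$ by definition of $g$, and continuity at $p\in L\setminus F$ by property (i) combined with $f\equiv 0$ on $L$). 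For the \emph{discontinuity set}, $f$ is continuous at every point of $X\setminus F$ by the foregoing, and discontinuous at each $a\in F$ by (ii); so $D(f)=F\subseteq f^{-1}(0)$.

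My candidate is
\[
g(x)=\sum_{k=1}^{\infty}2^{-k}\min\!\left(1,\,\frac{\rho(x,L)}{\rho(x,F)^{k}}\right)\qquad(x\in X\setminus L),
\]
which is continuous since each summand is continuous on $X\setminus L$ and the series converges uniformly (bounded by $2^{-k}$). Property (i) is immediate: as $x\to p\in L\setminus F$, $\rho(x,L)\to 0$ while $\rho(x,F)\to\rho(p,F)>0$, so every summand tends to $0$ and hence $g(x)\to 0$.

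The main obstacle is verifying (ii): given $a\in F$, one must produce a sequence $x_n\in X\setminus L$ with $x_n\to a$ such that $\rho(x_n,L)\geq c\cdot\rho(x_n,F)^{k}$ for some fixed $k$ and constant $c>0$, ensuring the $k$-th summand of $g(x_n)$ stays bounded below. Such a sequence is extracted from the $\ell$-neighborhood structure of $L$ at $a$: writing $x_n=a+r_nu_n$ with $u_n$ of unit norm, membership of $x_n$ in $X\setminus L$ forces $r_n\geq\varepsilon_a(u_n)$, where $\varepsilon_a(u)=\sup\{t\geq 0 : a+[0,t]u\subseteq L\}>0$ by the $\ell$-neighborhood property, and a direction-selection argument---choosing the $u_n$ so that $\varepsilon_a(u_n)$ is small relative to $r_n$---then yields the required lower bound on $\rho(x_n,L)$ via the geometry of $L$ (in the star-shaped case this already gives $\rho(x_n,L)/\rho(x_n,F)\to 1$, while for higher-order tangency such as the two-disk picture a higher $k$ contributes instead). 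Should the polynomial scales $\rho(x,F)^{-k}$ be insufficient against arbitrary tangency behavior permitted by $\ell$-miserability, they must be replaced by a faster-growing sequence of continuous scale functions adapted to the local geometry of $L$ near $F$. This direction-selection/sequence-extraction step is the technical heart of the proof and the only genuinely nontrivial use of the $\ell$-neighborhood hypothesis beyond what is needed for linear continuity at points of $F$.
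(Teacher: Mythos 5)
Your overall frame---set $f=0$ on the closed $\ell$-neighborhood $L$ witnessing miserability, let $f$ take values of a continuous $g$ on $X\setminus L$, and check lower semicontinuity, linear continuity at $F$ via the $\ell$-neighborhood property, continuity on $X\setminus F$ via (i), and discontinuity on $F$ via (ii)---is exactly the frame of the paper's proof, and that part of your verification is fine. The gap is that property (ii) is never established, and your explicit candidate for $g$ provably fails to satisfy it: $\ell$-miserability gives no quantitative comparison between $\rho(x,L)$ and $\rho(x,F)$. Concretely, take $X=\IR^2$, $F=\{0\}$ and $L=X\setminus G$ where $G=\{(s,t): s>0,\ 0<t<e^{-1/s}\}$. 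Then $L$ is a closed $\ell$-neighborhood of $F$ (every ray from the origin escapes $G$ near $0$, since $e^{-1/s}$ decays faster than any linear function) and $F\subseteq\overline{X\setminus L}$, but for $x=(s,t)\in G$ one has $\rho(x,L)\le t<e^{-1/s}$ while $\rho(x,F)\ge s$, so $\rho(x,L)/\rho(x,F)^k\le e^{-1/s}/s^k\to 0$ as $x\to 0$ in $G$ for every fixed $k$; by the uniform tail bound $2^{-k}$ this forces $g(x)\to 0$, so your $f$ is continuous at $0$ and $D(f)\ne F$. Your closing caveat that the polynomial scales may have to be ``replaced by a faster-growing sequence of continuous scale functions adapted to the local geometry'' concedes precisely this, but no such construction is supplied; the ``direction-selection'' sketch is not carried out (and as written it uses unit vectors, which are unavailable in a general metrizable topological vector space). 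So the technical heart of the proof is missing.

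The paper sidesteps all metric estimates by making (ii) hold by construction: invoking Corollary 2.4 and Proposition 2.2 of \cite{2009_2}, it extracts a set $A\subseteq X\setminus L$ with $\overline{A}\cap L=F$. Then $\overline{A}\setminus F$ and $L\setminus F$ are disjoint closed subsets of the open (metrizable, hence normal) subspace $Y=X\setminus F$, so Urysohn's lemma gives a continuous $g:Y\to[0,1]$ equal to $1$ on $\overline{A}\setminus F$ and $0$ on $L\setminus F$; putting $f=g$ on $Y$ and $f=0$ on $F$ yields $\limsup_{x\to a}f(x)=1$ for every $a\in F\subseteq\overline{A}$, i.e.\ your property (ii), for free, while the rest of the verification proceeds as in your proposal. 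The extraction of such a set $A$ is a purely topological separation fact about metric spaces, not a distance inequality, and it (or some substitute for it) is the ingredient your argument lacks.
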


\begin{proof}
Since the set $F$ is $\ell$-miserable in $X$, there exists a closed  $\ell$-neighborhood   $L$ of $F$
such that $F\subseteq\overline{X\setminus L}$.  Applying Corollary 2.4 and Proposition 2.2 from
\cite{2009_2}, we can find a set  $A\subseteq X\setminus L$ such that $\overline{A}\cap L=F$.
Consider the subspace $Y=X\setminus F$ of $X$ and observe that $B=\overline{A}\cap Y$ and $C=L\cap Y$ are disjoint closed sets in $Y$. By the Urysohn lemma \cite[1.5.11]{Engelking}, there exists a continuous function ${g:Y\to [0;1]}$ such that $g(B)\subset\{1\}$ and $g(C)\subset\{0\}$. Define $f:X\to[0;1]$ by  $f(x)=g(x)$ for $x\in Y$ and $f(x)=0$ for $x\in F$. It follows that  $f(L)=f(C)\cup f(F)\subset\{0\}$. But $L$ is  $\ell$-neighborhood of $F$. Therefore,  $f$ is linearly continuous at every point $x\in F$. On the other hand, the continuity of $f{\restriction}Y$ on the open subset $Y$ of $X$ ensures that $D(f)\subseteq F$. Thus  $f$ is continuous at every point $x\not\in F$. Consequently,  $f$ is linearly continuous. Taking into account that $F\subset\overline{A}$ and $f(A)\subset\{1\}$, we conclude that $D(f)=F\subseteq f^{-1}(0)$, which implies that $f$ is lower semicontinuous.
\end{proof}

\begin{lem}\label{l:miser4}
  Let $X$ be a metrizable topological vector space and $E$ be a $\bar\sigma$-$\ell$-miserable set in $X$. Then there exists a  lower semicontinuous linearly continuous function  $f:X\to [0;1]$ with $D(f)=E$.
\end{lem}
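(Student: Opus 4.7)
Write $E=\bigcup_{n\in\IN} F_n$ with each $F_n$ a closed $\ell$-miserable set (this is precisely the definition of $\bar\sigma$-$\ell$-miserability). For each $n$, Lemma~\ref{lemLCFwithMiserableD(f)} yields a lower semicontinuous linearly continuous function $f_n\colon X\to[0;1]$ with $D(f_n)=F_n\subseteq f_n^{-1}(0)$. I then define the natural weighted series
\[
f:=\sum_{n=1}^{\infty} 2^{-n} f_n,
\]
which takes values in $[0;1]$ because each $f_n$ does and the geometric weights sum to $1$.

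The partial sums $S_N:=\sum_{n=1}^{N}2^{-n}f_n$ are finite non-negative linear combinations of lower semicontinuous linearly continuous functions, hence each $S_N$ is itself lower semicontinuous and linearly continuous. Since $S_{N+1}\ge S_N$ pointwise and $S_N\to f$ pointwise, $f$ is lower semicontinuous as a pointwise supremum of lower semicontinuous functions. Moreover $\|f-S_N\|_\infty\le 2^{-N}$, so on every affine line $f$ is a uniform limit of continuous functions, and therefore linearly continuous.

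The remaining task is to show that $D(f)=E$. If $x\notin E$, then every $f_n$ is continuous at $x$, every $S_N$ is continuous at $x$, and uniform convergence passes continuity to $f$; hence $x\notin D(f)$. Conversely, take $x\in E$ and choose $n$ with $x\in F_n$. Because $f_n$ is lower semicontinuous with $f_n(x)=0$ and $x\in D(f_n)$, one can select $y_k\to x$ and $c>0$ with $f_n(y_k)\ge c$ for all $k$. For any $N\ge n$, subadditivity of $\liminf$ applied to the finite head $\sum_{m=1}^{N}2^{-m}(f_m(y_k)-f_m(x))$, combined with the lower semicontinuity estimate $\liminf_k(f_m(y_k)-f_m(x))\ge 0$ for $m\ne n$, yields a lower bound of $2^{-n}c$ on the liminf of the head; the tail is controlled by $|f_m(y_k)-f_m(x)|\le 1$ and sums to at most $2^{-N}$. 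Combining these gives
\[
\liminf_{k\to\infty}\bigl(f(y_k)-f(x)\bigr)\ \ge\ 2^{-n}c-2^{-N},
\]
and letting $N\to\infty$ produces $\liminf_k(f(y_k)-f(x))\ge 2^{-n}c>0$, so $f(y_k)\not\to f(x)$ and $x\in D(f)$.

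The main obstacle is the last step: one must ensure that the discontinuity of a single summand $f_n$ survives inside the series even though the other $f_m$ may themselves oscillate near $x$. The lower semicontinuity clause built into Lemma~\ref{lemLCFwithMiserableD(f)} is precisely what forbids the other summands from cancelling the jump of the distinguished $f_n$, while truncating the series at level $N$ and using the uniform bound $f_m\in[0;1]$ to dominate the tail is the standard device that turns this one-sided control into a strict positive lower bound on $\liminf_k(f(y_k)-f(x))$.
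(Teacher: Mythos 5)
Your proposal is correct and follows essentially the same route as the paper: the same decomposition $E=\bigcup_n F_n$, the same appeal to Lemma~\ref{lemLCFwithMiserableD(f)}, and the same weighted series $f=\sum_n 2^{-n}f_n$ with uniform convergence giving lower semicontinuity and linear continuity. The only difference is at the final step, where the paper simply cites \cite[Lemma 1]{Maslyuchenko_Mykhaylyuk_1992} to get $D(f)=\bigcup_n D(f_n)$, while you prove this identity directly (and correctly) via the superadditivity of $\liminf$, using that each $f_n$ is lower semicontinuous and vanishes on $D(f_n)$ --- a self-contained substitute for the citation.
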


\begin{proof}  Since $E$ is $\bar\sigma$-$\ell$-miserable, there exist closed $\ell$-miserable sets $F_n$ such that $E=\bigcup_{n=1}^\infty F_n$. By Lemma~\ref{lemLCFwithMiserableD(f)} there are lower semicontinuous  linearly continuous functions $f_n:X\to [0;1]$ with  $D(f_n)=F_n\subseteq f^{-1}(0)$.  Define $f=\sum_{n=1}^\infty\tfrac{1}{2^n}f_n.$
Obviously, this series is uniformly convergent. So, $f$ is lower continuous and linearly continuous, being a uniform limit of lower semicontinuous linearly continuous functions. Finally, \cite[Lemma 1]{Maslyuchenko_Mykhaylyuk_1992}  implies that $D(f)=\bigcup_{n=1}^\infty D(f_n)=\bigcup_{n=1}^\infty F_n=E.$
\end{proof}

\section{Proof of Theorem~\ref{t:CovMis}}\label{s:tCovMis}

Given an open convex set $U$ in a normed space $X$ and a nowhere dense subset $F\subset\partial U$, we have to prove that $F$ is $\ell$-miserable in $X$. This is trivially true if $F$ is empty. So, we assume that $F\ne\emptyset$. Then the boundary $S=\partial U$ of $U$ is not empty as well.

For a non-empty subset $E\subset X$ denote by $d_E:X\to\IR$ the continuous function assigning to every $x\in X$ the distance $d_E(x)=\inf_{y\in E}\|x-y\|$ to the subset $E$. Consider the sets
$$G=\big\{x\in U:d_S(x)<(d_F(x))^2\big\}\mbox{ \ and \ }L=X\setminus G.$$
Obviously, $G$ is open and  $L$ is closed. Since  $d_F(x)=d_S(x)=0$ for any $x\in F$, we have $F\subset L$. The nowhere density of $F$ in $S$ implies that $F\subset\overline{S\setminus F}\subset\overline{G}=\overline{X\setminus L}$.

It remains to prove that $L$ is an $\ell$-neighborhood of $F$ in $X$. Given any non-zero vector $v\in X$ and any $x\in F$, we should find $\e>0$ such that $x+[0;\e)\cdot v\subset L$. If $x+[0;+\infty)\cdot v\subset L$, then we are done. So, assume that $x+\lambda v\notin L$ for some $\lambda>0$. Replacing $v$ by $\lambda v$ we can assume that $\lambda=1$ and hence $x+v\notin L$ and $x+v\in G\subset U$. Since $U$ is open, there exists $\delta>0$ such that $x+v+\delta B\subset U$, where $B=\{x\in X:\|x\|<1\}$ denotes the open unit ball in $X$. We claim that $x+[0;\e)\cdot v\in L$ where $\e:=\frac{\delta}{\|v\|^2}$. Indeed, for any positive $t<\e$, by the convexity of the set $\overline{U}$ we get $(1-t)x+tU\subset\overline{U}$. Being open, the set $(1-t)x+tU$ is contained in the interior $\overline{U}\setminus\partial U=U$ of $\overline{U}$. Then $$x+tv+t\delta B=(1-t)x+t(x+v+\delta B)\subset (1-t)x+tU\subset U$$and hence
$$d_S(x+tv)\ge t\delta={\|tv\|^2}\frac{\delta}{t\|v\|^2}>d_F(x+tv)^2\frac{\delta}{\e\|v\|^2}=d_F(x+tv)^2,$$which implies that $x+tv\notin G$ and hence $x+tv\in L$.

%It is remains  to prove  $F\subset \overline{G}$. Fix  $x\in F$ and consider an open neighborhood $W$ of $x$. Choose $y\in S$  and  $\e\in(0;1)$ such that $B(y,2\e)\subset W\setminus F$. Therefore, $d_F(y)>2\e$. Pick $z\in U\cap B(y,\e^2)$. Hence, $d_S(z)<\|z-y\|<\e^2$ and $2\e<d_F(y)\le\|y-z\|+d_F(z)<d_F(z)+\e^2<d_F(z)+\e$. So,  $d_F(z)>\e$. Therefore, $\big(d_F(z)\big)^2>\e^2>d_S(z)$ and then $z\in G$. But $z\in B(y,\e^2)\subset B(y,2\e)\subset W$. Hence $W\cap G\ne\emptyset$. Thus, $x\in \overline{G}$

\section{{Proof of Theorem~\ref{t:1.6}}}\label{s:pf:t:1.6}

The proof of Theorem~\ref{t:1.6} is preceded by four lemmas.

\begin{lem}\label{l:subnorm} Let $U$ be a strictly convex open set in a topological vector space $X$. If $\partial U\ne\emptyset$, then $X$ admits a continuous norm $\|\cdot\|$ such that the set $U$ remains open and strictly convex in the normed space $(X,\|\cdot\|)$.
\end{lem}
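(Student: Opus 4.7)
The plan is to construct $\|\cdot\|$ as the symmetrized Minkowski gauge of $U$ relative to an interior point. Pick any $y_0\in U$ (nonempty since $\partial U\ne\emptyset$) and let $p:X\to[0,\infty)$ be the Minkowski functional $p(x)=\inf\{t>0:x\in t(U-y_0)\}$; this is sublinear with $\{p<1\}=U-y_0$ because $U-y_0$ is a convex open neighborhood of $0$. Define $\|x\|:=\max\{p(x),p(-x)\}$, equivalently the Minkowski functional of the symmetric convex open neighborhood $V_0:=(U-y_0)\cap(y_0-U)$ of $0$. Standard computations give that $\|\cdot\|$ is a seminorm, continuous on $(X,\tau)$: the inclusion $\{\|\cdot\|<\varepsilon\}\supset\varepsilon V_0$ makes $\|\cdot\|$ continuous at $0$, and subadditivity spreads this to all of $X$.

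The key geometric input is that $U$ contains no affine line. Assuming a line $y_*+\IR v_*\subset U$ with $v_*\ne 0$, and picking $z\in\partial U$, the convexity of $\overline U$ together with the limit $(1-\lambda)z+\lambda(y_*+(s/\lambda)v_*)\to z+sv_*$ as $\lambda\to 0^+$ forces $z+\IR v_*\subset\overline U$. Then for any $s_1<0<s_2$ the distinct points $z+s_1v_*$ and $z+s_2v_*$ lie in $\overline U$ and $z$ belongs to their open segment, so strict convexity forces $z\in U$, contradicting $z\in\partial U$. This no-line property is exactly what upgrades $\|\cdot\|$ from a seminorm to a norm: $\|x\|=0$ means $p(x)=p(-x)=0$, i.e.\ $y_0+\IR x\subset U$, hence $x=0$.

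It remains to check that $U$ is open and strictly convex in $(X,\|\cdot\|)$. For openness, given $u\in U$ (so $p(u-y_0)<1$) take $r:=\tfrac{1}{2}(1-p(u-y_0))>0$; subadditivity of $p$ gives $p(x-y_0)\le p(x-u)+p(u-y_0)<1$ for any $\|x-u\|<r$, so the $\|\cdot\|$-ball around $u$ of radius $r$ lies in $U$. For strict convexity, it suffices that $\overline U^{\|\cdot\|}=\overline U^\tau$, since the defining condition is purely algebraic. One inclusion is continuity of $\|\cdot\|$; for the other, both closures equal $y_0+\{p\le 1\}$, which one verifies by showing that $\{p>1\}$ is open in both topologies (via the mirror estimate $p(v+w)\ge p(v)-p(-w)$) and that each $v$ with $p(v)=1$ is approached by $(1-\varepsilon)v$ in both.

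The step I expect to be the main obstacle is the $\|\cdot\|$-openness of $U$. The most natural attempt is to establish $u+rV_0\subset U$ by absorbing $V_0$ inside the $\tau$-neighborhood $U-u$ of $0$, which essentially requires $V_0$ to be $\tau$-bounded — a condition that can fail when $X$ is not normable. Working instead with the sublinear gauge $p$ and its subadditivity estimate $p(x-y_0)\le p(x-u)+p(u-y_0)$ bypasses any boundedness question and treats all topological vector spaces uniformly.
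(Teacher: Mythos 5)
Your proposal is correct and follows essentially the same route as the paper: both take the gauge of the symmetrized set $(U-y_0)\cap(y_0-U)$, upgrade it from seminorm to norm by showing $U$ contains no affine line (you argue this directly from strict convexity, where the paper cites Proposition 1.1 of Bessaga--Pe{\l}czy\'nski to place such a line inside $\partial U$), verify openness of $U$ in the new norm, and conclude strict convexity by identifying the $\tau$-closure of $U$ with its norm-closure. The minor variations (subadditivity of $p$ versus the dilation $\tfrac{1}{1-\e}x\in U$ for openness; identifying both closures with $y_0+\{p\le 1\}$ versus showing the complements of the closures agree) are all sound.
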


\begin{proof} If $\partial U$ is not empty, then the open convex set $U$ is not empty and contains some point. Replacing $U$ by a suitable shift, we can assume that this point is zero. Consider the open convex symmetric subset $B=U\cap(-U)$ and its gauge functional
$\|\cdot\|_B:X\to[0,\infty)$, assigning to each point $x\in X$ the number $\|x\|_B=\inf\{t>0:x\in tB\}$. By \cite[1.4]{Sch}, $\|\cdot\|_B$ is a seminorm on $X$. To show that $\|\cdot\|_B$ is a norm, it  suffices to check that $\|x\|_B>0$ for any non-zero element $x\in X$. Assuming that $\|x\|_B=0$, we conclude that $\IR\cdot x\subset B\subset U$. By our assumption, the boundary $\partial U$ contains some point $b$. Proposition 1.1 of \cite{BP} implies that $b+\IR\cdot x\subset\partial U$, which contradicts the strict convexity of $U$.  So, $\|\cdot\|$ is a continuous norm on $X$.

To see that the set $U$ remains open in the normed space $(X,\|\cdot\|)$, take any point $x\in U$. Since $U$ is open, there exists $\e\in(0;\frac12]$ such that $\frac1{1-\e}x\in U$. Since $U$ is convex, $x+\e B\subset x+\e U=(1-\e)\tfrac1{1-\e}x+\e U\subset U,$ which means that $x$ is an interior point of $U$ in the normed space $(X,\|\cdot\|)$.

Next, we show that each point $y\in X\setminus \overline{U}$ does not belong to the closure of $U$ in the normed space $(X,\|\cdot\|)$. Since $y\notin\overline{U}$, there exists $\delta\in(0,1]$ such that $\frac1{1+\delta}y\notin\overline{U}$. We claim that $y+\delta B$ is disjoint with $U$. In the opposite case we can find a point $u\in U\cap(y+\delta B)$. Then $y\in U+\delta B\subset U+\delta U=(1+\delta)U$ and  hence $\frac1{1+\delta}y\in U$, which is a desired contradiction.

Therefore the closure of $U$ in $X$ coincides with the closure of $U$ in the norm $\|\cdot\|$. Consequently, the set $U$ remains strictly convex in the norm $\|\cdot\|$.
\end{proof}

\begin{lem}\label{lemMIsLnbhd}
  Let  $U$ be a strictly convex open set  in a normed space $X$, and $K$ be a compact subset of the boundary $S=\partial U$ of $U$ in $X$. Then there exists a closed $\ell$-neighborhood $L$ of $K$ such that $L\cap S=K$.
\end{lem}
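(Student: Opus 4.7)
The plan is to build $L$ as a closed ``star'' of short line segments emanating from each $k\in K$, with the length of each segment chosen small enough that we never re-encounter $S$.

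For $k\in K$ and a unit vector $v\in X$, define the first-hit parameter
$$\tau(k,v):=\inf\{t>0:k+tv\in S\}\in(0,+\infty],$$
with $\inf\emptyset=+\infty$, and set $r(k,v):=\tfrac{1}{2}\min\{1,\tau(k,v)\}$. Then put
$$L:=\{k+tv:k\in K,\ v\in X,\ \|v\|=1,\ 0\le t\le r(k,v)\}.$$
Strict convexity of $U$ already enters here: the intersection of the convex set $\overline{U}$ with the ray $\{k+tv:t\ge 0\}$ is a convex subset of the ray, hence a segment $[0,t^*]$ or the whole half-line, and strict convexity forces the open segment between any two boundary intersection points to lie in $U$, so the ray meets $S$ only at $t=0$ and (possibly) at $t=t^*$. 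In particular $\tau(k,v)>0$, and hence $r(k,v)>0$.

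The two descriptive properties are essentially immediate from the construction. If $k+tv\in L\cap S$ with $t>0$, then $\tau(k,v)\le t$ yet $t\le r(k,v)\le\tau(k,v)/2$, a contradiction; so $L\cap S=K$. Given $k\in K$ and $w\in X\setminus\{0\}$, the choice $\varepsilon:=r(k,w/\|w\|)/\|w\|>0$ satisfies $k+[0,\varepsilon)\cdot w\subset L$ (and $w=0$ is trivial), so $L$ is an $\ell$-neighborhood of $K$.

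The main obstacle is showing that $L$ is closed. I would reduce this to proving that $\tau$, and hence $r$, is upper semicontinuous on $K\times\{v\in X:\|v\|=1\}$. This is precisely where strict convexity is indispensable: when $\tau(k,v)=t^*<\infty$ and $\varepsilon>0$ is given, strict convexity places $k+\tfrac{t^*}{2}v$ in the open set $U$, while $k+(t^*+\tfrac{\varepsilon}{2})v$ lies in the open set $X\setminus\overline{U}$; both of these open conditions persist under small perturbations $(k',v')$ of $(k,v)$, and then the connectedness of the segment $\{k'+tv':t\in[\tfrac{t^*}{2},\,t^*+\tfrac{\varepsilon}{2}]\}$, together with the fact that the disjoint open sets $U$ and $X\setminus\overline{U}$ cannot cover it, forces it to meet $S=\partial U$, yielding $\tau(k',v')<t^*+\varepsilon$; the case $\tau(k,v)=+\infty$ is automatic. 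With upper semicontinuity of $r$ in hand, closedness of $L$ follows by a subsequence argument: for a convergent sequence $x_n=k_n+t_nv_n\to x$, compactness of $K$ and boundedness $t_n\in[0,\tfrac{1}{2}]$ yield subsequential limits $k_n\to k\in K$ and $t_n\to t\in[0,\tfrac{1}{2}]$; if $t=0$ then $x=k\in K\subset L$, and otherwise $v_n=(x_n-k_n)/t_n\to v:=(x-k)/t$, which is a unit vector by continuity of the norm, and upper semicontinuity gives $t\le\limsup_n r(k_n,v_n)\le r(k,v)$, so $x=k+tv\in L$.
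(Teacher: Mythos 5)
Your proof is correct, but it takes a genuinely different route from the paper's. The paper constructs $L$ ``from the outside'': it removes from $X$ the open set $W=\bigcup_{x\in S\setminus K}\bigl(x+\epsilon(x)B\bigr)$, where $\epsilon(x)=\min\{\delta(x),\delta(x)^2\}$ and $\delta(x)$ is the distance from the compact set $\tfrac12x+\tfrac12K$ (which lies in $U$ by strict convexity) to $X\setminus U$. With that definition, closedness and $L\cap S=K$ are immediate, and all the effort goes into verifying the $\ell$-neighborhood property by a quantitative estimate that hinges on the choice $\epsilon(x)\le\delta(x)^2$ --- the same ``squared distance'' device used in the proof of Theorem~\ref{t:CovMis}. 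You instead build $L$ ``from the inside'' as the star of segments $k+[0,r(k,v)]v$ truncated at half the first-hitting parameter $\tau(k,v)$; then the $\ell$-neighborhood property and $L\cap S=K$ are essentially free, and the burden shifts to closedness, which you obtain from upper semicontinuity of $\tau$ (a soft connectedness argument: a ray from $k$ meets $\overline U$ in a closed initial segment whose interior lies in $U$ by strict convexity, so slightly perturbed segments still pass from $U$ to $X\setminus\overline U$ and must cross $S$) combined with subsequence extraction in the compact set $K$. The hypotheses enter in different places --- the paper uses compactness to make $\delta(x)>0$ and strict convexity to put $\tfrac12x+\tfrac12K$ inside $U$; you use compactness for the sequential argument and strict convexity to get $\tau>0$ and the semicontinuity --- but both are legitimate, and your version is less computational at the price of the slightly delicate semicontinuity step. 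Two cosmetic points: the degenerate case $K=\emptyset$ (your union is then empty, which works) deserves a word, and your sequential verification of closedness is sufficient only because $X$, being normed, is metrizable --- worth saying explicitly.
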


\begin{proof} Let $B=\{x\in X:\|x\|<1\}$ be the open unit ball of the normed space $X$. If $K$ is empty, then we put $L=\emptyset$ and finish the proof. Now assume that $K\ne\emptyset$. In this case the sets $S=\partial U$ and $X\setminus U$ are not empty.

For every point $x\in S\setminus K$, consider the compact set $\frac12x+\frac12K:=\{\frac12x+\frac12y:y\in K\}$, which is contained in $U$ by the strict convexity of $U$. By the compactness of $\frac12x+\frac12K$, the number
$$\delta(x)=\inf\big\{\|y-z\|:y\in \tfrac12x+\tfrac12K,\;\;z\in X\setminus U\big\}$$ is strictly positive. For every $y\in K$ we get
$$
\|x-y\|=2\|x-(\tfrac12x+\tfrac12y)\|\ge 2\delta(x),\eqno(*)
$$
which implies that the open ball $x+2\delta(x)B$  does not intersect $K$.

Let $\epsilon(x)=\min\{\delta(x),\delta(x)^2\}$ for $x\in S\setminus K$.
It  follows that the open set $$W=\bigcup_{x\in S\setminus K}(x+\epsilon(x)B)$$ contains $S\setminus K$ and is disjoint with $K$. Consequently, the closed set $L=X\setminus W$ has intersection $L\cap S=K$. It remains to prove that $L$ is an $\ell$-neighborhood of $K$. Given any $x\in K$ and any non-zero vector $v\in X$, we should find $\e>0$ such that $x+[0;\e)\cdot v\subset L$. If $x+[0;\infty)\cdot v\subset L$, then there is nothing to prove. So, assume that $x+rv\notin L$ for some $r>0$. Replacing the vector $v$ by $rv$, we can assume that $r=1$ and hence $x+v\notin L$. Then $x+v\in s+\epsilon(s)B$ for some $s\in S\setminus K$.
We claim that for every positive $t<\min\{\frac12,\frac{\epsilon(s)}{\|v\|^2}\}$ we have $x+tv\in L$. To derive a contradiction, assume that $x+tv\notin L$ and hence $x+tv\in s'+\epsilon(s')B$ for some $s'\in S\setminus K$.
The inequality $(*)$ implies that $2\delta(s')\le \|s'-x\|<t\|v\|+\epsilon(s')\le t\|v\|+\delta(s')$ and hence $\delta(s')<t\|v\|$. Then $\epsilon(s')\le\delta(s')^2< t^2\|v\|^2$ and
$$\tfrac12\epsilon(s)+\tfrac{\epsilon(s')}{2t}< \tfrac12\epsilon(s)+\tfrac{t^2\|v\|^2}{2t}=\tfrac12\epsilon(s)+\tfrac12\|v\|^2t<\tfrac12\epsilon(s)+\tfrac12\|v\|^2\tfrac{\epsilon(s)}{\|v\|^2}=\epsilon(s)\le \delta(s).$$
Finally $$
\begin{aligned}
s'&\in x+t v-\epsilon(s')B\subset x+t(s-x+\epsilon(s)B)-\epsilon(s')B=
(1-t)x+t s+(t\epsilon(s)+\epsilon(s'))B=\\
&=(1-2t)x+2t\big(\tfrac12x+\tfrac12s+(\tfrac12\epsilon(s)+\tfrac{\epsilon(s')}{2t})B\big)\subset (1-2t)x+2t\big(\tfrac12s+\tfrac12x+\delta(s)B\big)\subset\\
&\subset (1-2t)x+2tU\subset U,
\end{aligned}
$$
which is not possible as $s'\in S\subset X\setminus U$.
\end{proof}

  A function $f:X\to Y$ between topological spaces $X$ and $Y$ is called \emph{$\bar\sigma$-continuous} if there exists a sequence of closed sets $F_n$ such that $X=\bigcup_{n\in\w}F_n$ and the restriction $f{\restriction}{F_n}$ is continuous for any $n\in\w$.

\begin{lem}\label{lemFsigmaMeasurableFunctionAsSumOfSigmaContinuous}
  Let $X$ be a perfectly normal space, $Y$ be a separable normed space and ${f:X\to Y}$ be an $F_\sigma$-measurable function. Then there is a sequence of $\bar\sigma$-continuous functions $f_n:X\to Y$ such that $f(x)=\sum_{n=0}^\infty f_n(x)$ and  $\sup_{x\in X}\|f_{n}(x)\|\le\frac1{2^n}$ for every $n\in\IN$.
\end{lem}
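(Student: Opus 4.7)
The plan is to produce, for each $\e>0$, a single $\bar\sigma$-continuous approximation $g:X\to Y$ with $\|g-f\|_\infty\le\e$, and then telescope to extract the desired series.

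For the main step, fix a countable dense set $\{y_k\}_{k\in\IN}\subseteq Y$ and cover $Y$ by the functionally open balls $B_k:=B(y_k,\e/2)$ (functional openness comes from continuity of $y\mapsto\|y-y_k\|$ in the normed space $Y$). By $F_\sigma$-measurability each $A_k:=f^{-1}(B_k)$ is $F_\sigma$; write $A_k=\bigcup_i F_{k,i}$ with $F_{k,i}$ closed, and enumerate the countable family $\{F_{k,i}\}_{k,i\in\IN}$ as a single sequence $(H_j)_{j\in\IN}$, recording the index $k(j)$ so that $H_j\subseteq A_{k(j)}$. Since $\bigcup_j H_j=X$, we may define
\[
g(x):=y_{k(j(x))},\qquad j(x):=\min\{j:x\in H_j\},
\]
and immediately obtain $\|g(x)-f(x)\|<\e/2\le\e$ for every $x$. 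To verify that $g$ is $\bar\sigma$-continuous, observe that $\{x:j(x)=j\}=H_j\cap U_j$, where $U_j:=X\setminus(H_1\cup\dots\cup H_{j-1})$ is open; perfect normality of $X$ lets us write $U_j=\bigcup_i K_{j,i}$ with $K_{j,i}$ closed, which yields the countable closed cover $\{H_j\cap K_{j,i}\}_{(j,i)\in\IN\times\IN}$ of $X$ on each piece of which $g$ is constant, hence continuous.

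To finish, apply this main step with $\e_n:=2^{-n-2}$ to obtain $\bar\sigma$-continuous maps $g_n:X\to Y$ with $\|f-g_n\|_\infty\le 2^{-n-2}$, and set $f_0:=g_0$ and $f_n:=g_n-g_{n-1}$ for $n\ge 1$. A difference of two $\bar\sigma$-continuous functions is again $\bar\sigma$-continuous, obtained by intersecting the two closed covers pairwise, so each $f_n$ is $\bar\sigma$-continuous; the triangle inequality gives
\[
\|f_n\|_\infty\le\|g_n-f\|_\infty+\|f-g_{n-1}\|_\infty\le 2^{-n-2}+2^{-n-1}=3\cdot 2^{-n-2}\le 2^{-n}
\]
for every $n\ge 1$. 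Since $\sum_{k=0}^n f_k=g_n$ converges uniformly to $f$, we conclude that $f(x)=\sum_{n=0}^\infty f_n(x)$ for every $x\in X$.

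The main obstacle is the construction of the approximating $\bar\sigma$-continuous $g$: the smallest-index rule makes $g$ take only countably many values, but its level sets $\{x:j(x)=j\}$ are differences of $F_\sigma$ sets and are neither closed nor open in general. Perfect normality of $X$ is precisely what allows us to split these level sets into countably many closed cells on which $g$ is constant, thereby recovering $\bar\sigma$-continuity from what at first looks only like Borel measurability.
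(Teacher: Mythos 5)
Your proof is correct and follows essentially the same route as the paper: approximate $f$ uniformly to within $2^{-n-2}$ by a countably-valued $\bar\sigma$-continuous function $g_n$ built from preimages of small balls centred at a countable dense set, then telescope $f_n=g_n-g_{n-1}$. The only difference is cosmetic: where the paper invokes the reduction theorem from Kuratowski to disjointify the $F_\sigma$ preimages, you achieve the same disjointification by hand via the minimal-index rule together with perfect normality (open sets being $F_\sigma$), which makes the argument self-contained.
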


\begin{proof}
Fix any countable dense set  $\{y_k:k\in\w\}$ in the separable normed space $Y$. Let $B=\{y\in Y:\|y\|<1\}$ be the open unit ball in $Y$. By the $F_\sigma$-measurability of $f$, for every $n,k\in\w$, the set $A_{n,k}:=f^{-1}(y_k+\frac1{2^{n+2}}B)$ is of type $F_\sigma$ in $X$. Obviously, $X=\bigcup_{k\in\w} A_{n,k}$ for every $n\in\w$. By the reduction theorem \cite[p.358]{Kuratowski1}, for every $n\in\w$ there exists a disjoint sequence $(E_{n,k})_{k\in\w}$ of $F_\sigma$-sets $E_{n,k}\subseteq A_{n,k}$ such that $\bigcup_{k\in\w}E_{n,k}=X$. Write each $F_\sigma$-set $E_{n,k}$ as the countable union $E_{n,k}=\bigcup_{j\in\w}F_{n,k,j}$ of closed sets $F_{n,k,j}$. For every $n\in\w$, consider the function $g_n:X\to Y$ assigning to each $x\in X$ the point $y_k$ where $k\in\w$ is the unique number such that $x\in E_{n,k}\subset A_{n,k}$. Then $\|f(x)-g_n(x)\|=\|f(x)-y_k\|<\frac1{2^{n+2}}$. Since $g_n{\restriction}{F_{n,k,j}}$ is constant and $X=\bigcup_{k,j\in\w}F_{n,k,j}$, the function $g_n$ is $\bar\sigma$-continuous. Put $f_0=g_0$ and $f_{n}=g_{n}-g_{n-1}$ for $n\in\IN$. Then $$\|f_n(x)\|\le \|g_n(x)-f(x)\|+\|f(x)-g_{n-1}(x)\|<\tfrac1{2^{n+2}}+\tfrac1{2^{n+1}}<\tfrac1{2^n}$$for every $x\in X$ and
$\sum_{n=0}^\infty f_n=\lim_{n\to\infty}g_n=f$.
\end{proof}

A topological space $Y$ is called an {\em absolute extensor} if every continuous map $f:X\to Y$ defined on a closed subspace $X$ of a metrizable space $M$ has a continuous extension $\bar f:M\to Y$. By a classical Dugundji result \cite{Dugundji},  every convex subset of a normed space is an absolute extensor.

%\begin{lem}\label{l:AR} Let $Y$ be a Polish absolute retract. Any continuous map $f:A\to Y$ defined on a closed subspace $A$ of a normal topological space $X$ admits a continuous extension $\bar f:X\to Y$.
%\end{lem}

%\begin{proof} By the definition of a Polish absolute retract, we can identify $Y$ with a retract of the countable product of lines $\IR^\w$. Fix any retraction $r:\IR^\w\to Y$ of $\IR^\w$ onto $Y$, which is a continuous map such that $r(y)=y$ for all $y\in Y$, By the Tietze-Urysohn Theorem~\cite[2.18]{Engelking}, any map $f:A\to Y$ defined on a closed subset $A$ of a perfectly normal space $X$ has a continuous extension $\tilde f:X\to\IR^\w$. Then $\bar f=r\circ \tilde f:X\to Y$ is a required continuous extension of the map $f$.
%\end{proof}

\begin{lem}\label{lemExtentionOsSigmaContinuousFunction}
  Let $U$ be a strictly convex subset of a  topological vector space $X$,  $\partial U$ be the boundary of $U$ in $X$ and $K\subset\partial U$ be a  $\sigma$-compact functionally closed set in $X$. Any $\bar\sigma$-continuous function $f:K\to Y$ to an absolute extensor $Y$ can be extended to a  linearly continuous $\bar\sigma$-continuous function $\bar f:X\to Y$.
\end{lem}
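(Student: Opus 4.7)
The plan is to reduce the problem to a normed setting via Lemma~\ref{l:subnorm}, decompose $K$ into a nested compact exhaustion compatible with the $\bar\sigma$-continuity of $f$, and then build $\bar f$ via compatible Dugundji-type extensions on the closed $\ell$-neighborhoods supplied by Lemma~\ref{lemMIsLnbhd}. The strict convexity of $U$, which forces every affine line to meet $\partial U$ (and hence $K$) in at most two points, is what ultimately delivers linear continuity.

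First, since $\partial U\supseteq K$ is nonempty, Lemma~\ref{l:subnorm} provides a continuous norm $\|\cdot\|$ on $X$ in which $U$ is still strictly convex and open. Writing $\tau$ for the original topology and $\tau'$ for the (coarser) norm topology, linear continuity is unaffected by the change (both topologies agree on any affine line), and any $\tau'$-continuous function on a $\tau'$-closed set is $\tau$-continuous on the same (hence also $\tau$-closed) set. So it suffices to build $\bar f$ with the required properties relative to $\tau'$. Combining $\sigma$-compactness of $K$ with the $\bar\sigma$-continuity of $f$, I write $K=\bigcup_n C_n$ as an increasing union of compact sets on which $f$ is continuous: take $C_n:=K_n\cap F_n$, where $K_n\subseteq K_{n+1}$ is a compact exhaustion of $K$ and $F_n\subseteq F_{n+1}$ are the closed subsets of $K$ witnessing $\bar\sigma$-continuity.

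For each $n$, Lemma~\ref{lemMIsLnbhd} produces a $\tau'$-closed $\ell$-neighborhood of $C_n$ meeting $\partial U$ only in $C_n$; after replacing the $n$-th such neighborhood by the union of the first $n$, I get a nested chain $L_n\subseteq L_{n+1}$ of closed $\ell$-neighborhoods with $L_n\cap\partial U=C_n$. Inductively construct continuous maps $g_n:L_n\to Y$ (in $\tau'$) with $g_n|C_n=f|C_n$ and $g_{n+1}|L_n=g_n$. The base step extends $f|C_1$ from the closed set $C_1\subseteq L_1$ using the absolute extensor property of $Y$. For the inductive step, $g_n$ on $L_n$ and $f$ on $C_{n+1}$ agree on $L_n\cap C_{n+1}=C_n$, hence define a continuous function on the closed subset $L_n\cup C_{n+1}\subseteq L_{n+1}$, which extends continuously to $L_{n+1}$ again by the absolute extensor property.

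Finally define $\bar f(x):=g_n(x)$ whenever $x\in L_n$ (well-defined by compatibility) and $\bar f(x):=\tilde g_1(x)$ for $x\notin L:=\bigcup_n L_n$, where $\tilde g_1:X\to Y$ is any continuous extension of $g_1$ to $X$ supplied by the absolute extensor property. Then $\bar f|K=f$, and each $\bar f|L_n=g_n$ is $\tau'$-continuous on the $\tau$-closed set $L_n$; combining this with the $\tau$-$F_\sigma$ decomposition of $X\setminus K$ (available because $K$ is functionally closed in $\tau$) yields the required $\bar\sigma$-continuity of $\bar f$. For linear continuity at $x\in C_N\subseteq K$ in a direction $v$, the $\ell$-neighborhood property of $L_N$ furnishes $\e>0$ with $x+[0,\e)\cdot v\subseteq L_N$; on this initial segment $\bar f=g_N$ is continuous, so $\bar f(x+tv)\to g_N(x)=f(x)$ as $t\to 0^+$, and applying the argument also to $-v$ gives two-sided continuity along every line through $x$.

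I expect the most delicate step to be linear continuity at points $x\in X\setminus K$, where a line through $x$ may cross between different $\ell$-neighborhoods $L_n$. The key ingredients that should make this work are the compatibility $g_{n+1}|L_n=g_n$, which guarantees that $\bar f$ is given by a single continuous function on any ascending chain $L_{n_1}\subseteq L_{n_2}\subseteq\cdots$ traversed by the line, and the strict convexity of $U$, which forbids $K$ from accumulating along any line and thereby reduces linear continuity off $K$ to continuity away from at most two $K$-points on the line, already handled above.
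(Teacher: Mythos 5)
Your inductive construction of compatible extensions $g_n$ on the nested closed $\ell$-neighborhoods $L_n=\bigcup_{k\le n}L_k'$ matches the paper's up to that point, but there is a genuine gap afterwards: your sets $L_n$ need not exhaust $X$, and the patch ``$\bar f:=\tilde g_1$ on $X\setminus\bigcup_nL_n$'' breaks the conclusion. At a point $z\in L_m'\setminus L_1'$ with $z\notin K$ that is approached (along some line, or at all) by points outside $\bigcup_nL_n$, you get $\bar f(z)=g_m(z)$ while the approaching values tend to $\tilde g_1(z)$, and nothing forces these to agree; so linear continuity at points of $X\setminus K$ --- which you yourself flag as the delicate step --- actually fails for the function you build. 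The appeal to strict convexity (``at most two $K$-points on a line'') does not help here, because the obstruction is not the line crossing $K$ but the line crossing between the various $L_n$ and the complement of their union. The $\bar\sigma$-continuity claim has the same defect: $X\setminus\bigcup_nL_n$ is only a $G_\delta$, and $\bar f$ restricted to the closed sets $\varphi^{-1}([2^{-n},1])$ is a discontinuous mixture of $\tilde g_1$ and the $g_m$'s, so you have not exhibited a countable closed cover on whose members $\bar f$ is continuous.

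The paper's proof removes the problem at the source: with $\varphi$ a continuous function satisfying $K=\varphi^{-1}(0)$ and the continuous metric $\rho(x,y)=\|x-y\|+|\varphi(x)-\varphi(y)|$, it sets $\tilde G_n=\{x:\rho(x,K)\ge 2^{-n}\}$ and enlarges your neighborhoods to $L_n=\tilde G_n\cup\bigcup_{k\le n}L'_k$. These are still closed $\ell$-neighborhoods of $K_n$ with $L_n\cap K=K_n$, so the same Dugundji-type induction goes through (carried out in the metric $\rho$, which also supplies the metrizability needed for the absolute-extensor extensions); but now $\bigcup_nL_n=X$, so no external patch $\tilde g_1$ is needed, the closed sets $L_n$ witness $\bar\sigma$-continuity directly, and every $x\notin K$ lies in the \emph{open} set $G_n=\{y:\rho(y,K)>2^{-n}\}\subset L_n$, on which $\bar f$ coincides with the single continuous function $f_n$ --- giving genuine continuity off $K$ and reducing linear continuity to the $\ell$-neighborhood argument at points of $K$, exactly as in your (correct) treatment of that case. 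If you incorporate the sets $\tilde G_n$ into your $L_n$, your argument becomes the paper's.
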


\begin{proof} There is nothing to prove that $K=\emptyset$. So, we assume that $K\ne\emptyset$. In this case $\partial U\ne\emptyset$ and by Lemma~\ref{l:subnorm} the space $X$ admits a norm $\|\cdot\|$ such that the set $U$ remains open and strictly convex in the normed space $(X,\|\cdot\|)$.

Fix any $\bar\sigma$-continuous function $f:K\to Y$ to an absolute extensor $Y$. It follows that the $\sigma$-compact set $K$ can be written as the countable union  $K=\bigcup_{n\in\w}K_n$ of an increasing sequence $(K_n)_{n\in\w}$ of compact sets such that for every $n\in\w$ the restriction $f{\restriction}{K_n}$ is continuous.

Since $K$ is functionally closed in $X$, there exists a continuous function $\varphi:X\to[0,1]$ such that $K=\varphi^{-1}(0)$. On the space $X$ consider the continuous metric $\rho$ defined by $\rho(x,y)=\|x-y\|+|\varphi(x)-\varphi(y)|$. For a point $x\in X$ and a non-empty subset $A\subset X$ let $\rho(x,A)=\inf_{a\in A}\rho(x,a)$.

 For every $n\in\w$ consider the $\rho$-open set $G_n=\{x\in X:\rho(x,K)>\frac1{2^n}\}$ and the $\rho$-closed set $\tilde G_n=\{x\in X:\rho(x,K)\ge\frac1{2^n}\}$ in $X$ and observe that $\bigcup_{n\in\w}G_n=\bigcup_{n\in\w}\tilde G_n=X\setminus K$.

By Lemma~\ref{lemMIsLnbhd}, for every $n\in\w$ there is a closed $\ell$-neighborhood $L_n' $ of $K_n$ in the normed space $(X,\|\cdot\|)$ such that $L_n'\cap K=K_n$. For every $n\in\w$ consider the $\rho$-closed set $L_n=\tilde{G}_n\cup\bigcup_{k\le n}L_k'$ and observe that $L_n$ is an $\ell$-neighborhood of $K_n$ such that $L_n\cap K=K_n$. It follows that $\bigcup_{n\in\w}L_n=X$.

A function $g:A\to Y$ on a subset $A\subset X$ will be called {\em $\rho$-continuous} if it is continuous with respect to the topology on $A$, generated by the metric $\rho$. Observe that for every $n\in\w$ the restriction $f{\restriction}K_n$ is $\rho$-continuous because the topology of the compact space $K_n$ is generated by the metric $\rho$.

Since $Y$ is an absolute extensor, the $\rho$-continuous function $f{\restriction}K_0$ has a $\rho$-continuous extension $f_0:L_0\to Y$. By induction, for every $n\in\IN$  find a $\rho$-continuous function $f_n:L_n\to Y$ such that $f_n{\restriction}K_n=f{\restriction}K_n$ and $f_{n}{\restriction}L_{n-1}=f_{n-1}$. Such a function $f_n$ exists since $Y$ is an absolute extensor and $$f{\restriction}K_n\cap L_{n-1}=f{\restriction}K_{n-1}=f_{n-1}{\restriction}K_{n-1}=f_{n-1}{\restriction}K_n\cap L_{n-1}$$ (by the inductive assumption) and hence the function $(f{\restriction}K_n)\cup f_{n-1}$ is well-defined and $\rho$-continuous (so has a $\rho$-continuous extension $f_n$).

Now consider the function $\bar f:X\to Y$ such that $\bar f{\restriction}L_n=f_n$ for every $n\in\w$. Observe that for every $n\in\w$ we have $\bar f{\restriction}K_n=f_n{\restriction}K_n=f{\restriction}K_n$, which implies that $\bar f$ is an extension of the function $f$. Taking into account that the metric $\rho$ on $X$ is continuous, and the restrictions $\bar f{\restriction}L_n$, $n\in\w$, are $\rho$-continuous, we conclude that that these restrictions are continuous, which implies that the function $\bar f:X\to Y$ is $\bar\sigma$-continuous.

It is remains to show that the function $\bar f$ is linearly continuous. Fix $x\in X$. If $x\in X\setminus K$, then $x\in G_n\subset L_n$ for some $n\in\w$. Since  $\bar f|_{G_n}=f_n|_{G_n}$, the function $\bar f$ is $\rho$-continuous and hence continuous at $x$. If $x\in F$, then $x\in K_n$ for some $n\in\w$. Since $L_n$ is an $\ell$-neighborhood of $x$ and $\bar f{\restriction}L_n=f_n$ is continuous,  $\bar f$ is $\ell$-continuous at $x$.
\end{proof}

Now we can present {\em the proof  of Theorem~\ref{t:1.6}}.  Let $U$ be a strictly convex open set in a linear topological space $X$, $\partial U$ be the boundary of $U$ in $X$, and $K\subset \partial U$ be a $\sigma$-compact functionally closed set in $X$. Given any $F_\sigma$-measurable function $f:K\to Y$ to a Banach space $Y$, we need to find a linearly continuous $F_\sigma$-measurable extension $\bar f:X\to Y$ of $f$.

 If $K=\emptyset$, then the zero function $\bar f:X\to\{0\}\subset Y$ is a continuous function extending the function $f$. So, we assume that $K\ne\emptyset$ and then $\partial U\ne\emptyset$. By Lemma~\ref{l:subnorm}, the linear topological space $X$ admits a continuous norm. Then all compact subsets of $X$ are metrizable and second-countable. This implies that the $\sigma$-compact space $K$ has countable network and hence is hereditarily Lindel\"of and perfectly normal.

By Theorem 2.5 of \cite{BR19}, any Borel image of a Polish space has countable spread. Since  metrizable spaces with countable spread are separable \cite[4.1.15]{Engelking}, the image $f(K)$ is separable and hence is contained in a separable Banach subspace $Y'$ of the Banach space $Y$. By Dugundgi Theorem~\cite{Dugundji}, the closed unit ball $B=\{y\in Y':\|y\|\le 1\}$ of the Banach space $Y'$ is an absolute extensor.

By Lemma~\ref{lemFsigmaMeasurableFunctionAsSumOfSigmaContinuous}, the $F_\sigma$-measurable function $f:K\to Y'$ can be written as the sum of a uniformly convergent series $f=\sum_{n=0}^\infty f_n$ for some sequence  of $\bar\sigma$-continuous functions $f_n:K\to Y'$ such that $f_n(K)\subset \frac1{2^n}B$ for any $n>0$. By Lemma~\ref{lemExtentionOsSigmaContinuousFunction} for every $n\in\w$ there exists a linearly continuous $\bar\sigma$-continuous function $\bar f_n:X\to Y'$ such that $\bar f_n{\restriction}F=f_n$ and $\bar f_n(X)\subset \frac1{2^n}B$ if $n>0$. It follows that the series $\bar f:=\sum_{n=0}^\infty \bar f_n$ is uniformly convergent to a linearly continuous  function $\bar f:X\to Y'$ extending the $F_\sigma$-measurable function $f$. Since each $\bar\sigma$-continuous function $f_n$ is $F_\sigma$-measurable, the sum $\bar f$ of the uniformly convergent series $\sum_{n=0}^\infty f_n$ is $F_\sigma$-measurable by Theorem 2 in \mbox{\cite[\S 31]{Kuratowski1}}.

\bibliographystyle{amsplain}

\end{document}